\documentclass[a4paper, intlimits, reqno]{amsart}

\usepackage[english]{babel}
\usepackage[T1]{fontenc}
\usepackage[utf8]{inputenc}

\usepackage{amsmath}
\usepackage{amssymb}
\usepackage{MnSymbol}
\usepackage{amsthm}
\allowdisplaybreaks
\usepackage{amsfonts}
\usepackage{mathtools}
\usepackage{enumitem}
\usepackage{url}
\usepackage{dsfont}
\usepackage[numbers]{natbib}
\usepackage{caption}
\usepackage{tikz}
\usetikzlibrary{patterns,calc,arrows}
\tikzstyle{arrow}=[draw, -latex]
\usepackage{prettyref}

\newrefformat{def}{Definition \ref{#1}}
\newrefformat{rem}{Remark \ref{#1}}
\newrefformat{sect}{Section \ref{#1}}
\newrefformat{sub}{Section \ref{#1}}
\newrefformat{prop}{Proposition \ref{#1}}
\newrefformat{thm}{Theorem \ref{#1}}
\newrefformat{chap}{Chapter \ref{#1}}
\newrefformat{cor}{Corollary \ref{#1}}
\newrefformat{par}{Paragraph \ref{#1}}
\newrefformat{ex}{Example \ref{#1}}
\newrefformat{part}{Part \ref{#1}}
\newrefformat{fig}{Figure \ref{#1}}
\newrefformat{app}{Appendix \ref{#1}}
\newrefformat{que}{Question \ref{#1}}
\newrefformat{cond}{Condition \ref{#1}}


\swapnumbers
\newtheoremstyle{dotless}{}{}{\itshape}{}{\bfseries}{}{}{}
\theoremstyle{dotless}
\theoremstyle{plain}
\newtheorem{thm}{Theorem}[section]
\newtheorem{lem}[thm]{Lemma}
\newtheorem{prop}[thm]{Proposition}

\theoremstyle{definition}
\newtheorem{defn}[thm]{Definition}
\newtheorem{rem}[thm]{Remark}

\newtheorem{exa}[thm]{Example}

\newtheorem{conv}[thm]{Convention}

\newtheorem{cond}[thm]{Condition}

\newcommand{\N} {\mathbb{N}}

\newcommand{\R} {\mathbb{R}}
\newcommand{\C} {\mathbb{C}}
\newcommand{\K} {\mathbb{K}}

\newcommand{\D} {\mathbb{D}}

\DeclareMathOperator{\re}{Re}
\DeclareMathOperator{\im}{Im}

\providecommand{\differential}{\mathrm{d}}
\renewcommand{\d}{\differential}
\providecommand{\distance}{\mathrm{D}}
\renewcommand{\D}{\distance}


\newcount\cntloop

\newcommand{\normrec}[2]{%
  \ifnum\cntloop<#1
    \advance\cntloop by 1
    \def\next{\normrec{#1}{\left|\!#2\right|\!}}%
  \else
    \def\next{\left|\!#2\right|}%
  \fi
  \next}

\begin{document}

\title[Nuclearity]{On the nuclearity of weighted spaces of smooth functions}
\author[K.~Kruse]{Karsten Kruse}
\address{TU Hamburg \\ Institut f\"ur Mathematik \\
Am Schwarzenberg-Campus~3 \\
Geb\"aude E \\
21073 Hamburg \\
Germany}
\email{karsten.kruse@tuhh.de}

\subjclass[2010]{Primary 46A11, Secondary 46E10}

\keywords{nuclear, weight, smooth, partition of unity}

\date{\today}
\begin{abstract}
Nuclearity plays an important role for the Schwartz kernel theorem to hold and in transferring the surjectivity 
of a linear partial differential operator from scalar-valued to vector-valued functions via tensor product theory. 
In this paper we study weighted spaces $\mathcal{EV}(\Omega)$ of smooth functions 
on an open subset $\Omega\subset\R^{d}$ whose topology is given by a family of weights $\mathcal{V}$. 
We derive sufficient conditions on the weights which make $\mathcal{EV}(\Omega)$ a nuclear space. 
\end{abstract}
\maketitle
\section{Introduction}
In this paper we study the relation between a family of weight functions on an open set $\Omega\subset\R^{d}$, $d\in\N$, 
and the nuclearity (see \cite{Gro}, \cite{Pietsch1972}) of the space of infinitely continuously partially 
differentiable functions on $\Omega$ with values in $\K=\R$ or $\C$ whose topology is generated by that family of weights. 
The spaces we want to consider look as follows.

\begin{defn}\label{def:weighted_smooth}
Let $\Omega\subset\R^{d}$ be open and $(\Omega_{n})_{n\in\N}$ a family of 
non-empty sets such that $\Omega_{n}\subset\Omega_{n+1}$ and $\Omega=\bigcup_{n\in\N} \Omega_{n}$.
Let $\mathcal{V}:=(\nu_{n})_{n\in\N}$ be a countable family of positive continuous functions 
$\nu_{n}\colon \Omega \to (0,\infty)$ such that $\nu_{n}\leq\nu_{n+1}$ for all $n\in\N$.
We call $\mathcal{V}$ a (directed) family of continuous weights on $\Omega$ and define
\[
\mathcal{EV}(\Omega):=\{ f \in \mathcal{C}^{\infty}(\Omega, \K)\; | \;\forall\; n \in \N,\,m\in\N_{0}:\; |f|_{n,m}<\infty\}
\]
where $\N_{0}:=\N\cup\{0\}$ and
\begin{equation}\label{eq:intro_seminorm}
|f|_{n,m}:=\sup_{\substack{x \in \Omega_{n}\\ \alpha \in \N_{0}^{d}, \, |\alpha| \leq m}}|\partial^{\alpha}f(x)|\nu_{n}(x).
\end{equation}
\end{defn}

Here $\mathcal{C}^{\infty}(\Omega, \K)$ denotes the space of infinitely continuously partially differentiable 
functions on $\Omega$ with values in $\K$ and $\partial^{\alpha}f$ the $\alpha$-th partial derivative of 
$f$ with respect to the multiindex $\alpha$ of order $|\alpha|$. 

Our goal is to derive sufficient conditions on $\mathcal{V}$ (and $(\Omega_{n})_{n\in\N}$) 
such that $\mathcal{EV}(\Omega)$ becomes a nuclear space. 
Nuclearity implies the approximation property and 
corresponding sufficient conditions for $\mathcal{EV}(\Omega)$ having the approximation property are stated 
in \cite[5.2 Theorem, p.\ 253]{kruse2018_2} and \cite[3.4 Remark, p.\ 239]{kruse2018_2}.
For weighted spaces of smooth functions with a different locally convex topology, e.g.\ where 
the supremum in \eqref{eq:intro_seminorm} is taken over all $\alpha \in \N_{0}^{d}$ or all $n\in\N$, 
conditions for nuclearity are known due to Komatsu \cite[Theorem 2.6, p.\ 44]{Kom7}, Petzsche \cite[3.9 Satz, p.\ 158-159]{Petzsche1978}, 
Braun et al.\ \cite[4.9 Proposition, p.\ 223]{Braun1990} and Boiti et al.\ \cite[Corollary 2.3, p.\ 4]{boiti2019_1}, 
\cite[Theorem 3.6, p.\ 7]{boiti2019_1}, \cite[Theorem 3.3, p.\ 16]{boiti2019_2} for spaces of ultradifferentiable functions 
and due to Mityagin \cite[Corollary, p.\ 322]{Mityagin1960}, \cite[Theorem 3, p.\ 323]{Mityagin1960}, 
Wloka \cite[Satz 6-8, p.\ 88-92]{Wloka1967}, 
Yamanaka \cite{Yamanaka1961}, Nakamura \cite[Theorem 2, p.\ 52]{nakamura1968_1} 
and Funakosi \cite[Theorem 2, p.\ 65]{Funakosi1968} for Gelfand-Shilov spaces. 
Subspaces of $\mathcal{EV}(\Omega)$ consisting of holomorphic functions with exponential growth or decay often appear 
in Paley-Wiener type theorems describing the image of Fourier transformation, 
see e.g.\ \cite[Theorem 4.5, p.\ 162-163]{gel1968}, \cite[Theorem 7.3.1, p.\ 181]{H1}, 
\cite[Theorem 8.1.1, p.\ 368-369]{Kan} and \cite[Theorem 3.1.1, p.\ 485]{Kawai}, and are also the basic spaces for the 
theory of Fourier hyperfunctions, see e.g.\ \cite{Ion/Ka}, \cite{Ito/Nag}, \cite{J}, \cite{ich}, \cite{L2} and \cite{L3}.
An important consequence of nuclearity is that the Schwartz kernel theorem holds (see e.g.\ Petzsche \cite[4.2 Folgerung, p.\ 161-162]{Petzsche1978}). 
In addition, an affirmative answer to the question of nuclearity 
of $\mathcal{EV}(\Omega)$ transfers the surjectivity of a linear partial differential operator
$P(\partial)\colon \mathcal{EV}(\Omega)\to \mathcal{EV}(\Omega)$ with smooth coefficients 
to its corresponding vector-valued counterpart on weighted spaces of smooth functions with values 
in certain locally convex spaces $E$, for example, in the case that $\mathcal{EV}(\Omega)$ and $E$ are Fr\'{e}chet spaces 
by \cite[Satz 10.24, p.\ 255]{Kaballo} and \cite[3.21 Example, p.\ 14]{kruse2017} 
and in other cases using the splitting theory of Vogt \cite{V1} or of Bonet and Doma\'nski \cite{Dom1}. 
$\mathcal{EV}(\Omega)$ is a Fr\'{e}chet space if for every compact set $K\subset\Omega$ there is some $n\in\N$ such that 
$K\subset\Omega_{n}$ (see e.g.\ \cite[3.7 Proposition, p.\ 240]{kruse2018_2}).
Applications to the Cauchy-Riemann operator $\overline{\partial}$ are given in \cite[5.24 Theorem, p.\ 95]{ich} 
and \cite[4.9 Corollary, p.\ 21]{kruse2018_5}.

Gelfand and Vilenkin treat the spaces $K\{\nu_{n}\}:=\mathcal{EV}(\Omega)$ with $\Omega_{n}=\R$, 
$\nu_{n}\geq 1$, monotonically increasing $\nu_{n}(|\cdot|)$ and $\nu_{n}\in\mathcal{C}^{\infty}(\R,\R)$ for all $n\in\N$. 
If 
\begin{enumerate}
\item [(N.1)] for any $n\in\N$ and $\alpha\in\N_{0}$ there are $C>0$ and $k\in\N$ such that 
$|\partial^{\alpha}\nu_{n}|\leq C \nu_{k}$, and if 
\item [(N.2)] for any $n\in\N$ there is $k\in\N$ such that $\lim_{|x|\to\infty}\nu_{n}(x)/\nu_{k}(x)=0$ 
and $\nu_{n}/\nu_{k}$ is an element of the Lebesgue space $L^{1}(\R)$, 
\end{enumerate}
then $K\{\nu_{n}\}$ is nuclear by \cite[Ch.\ I, Sect.\ 3.6, Theorem 7, p.\ 82]{Gelfand1964}. 
In particular, the conditions are fulfilled for $\nu_{n}(x)=(1+x^{2})^{n}$, $x\in\R$, 
implying the nuclearity of the classical Schwartz space (in one variable). 
The downside of Gelfand's and Vilenkin's conditions is that 
$\nu_{n}$ has to be smooth and that $\Omega_{n}=\R$ for all $n\in\N$. 

For the subspace $\mathcal{OV}(\Omega)$ of $\mathcal{EV}(\Omega)$ of holomorphic functions on $\Omega\subset\C$ 
sufficient conditions for the nuclearity of $\mathcal{OV}(\Omega)$ in terms of $\mathcal{V}$ 
are derived by Wloka in \cite[Satz 9a, p.\ 178]{Wloka1966} using a technique of reproducing kernels 
(cf.\ \cite[p.\ 722-723]{Wloka1965}). 
Wloka's conditions for $\mathcal{EV}(\Omega)$ in \cite[Satz 3, p.\ 82]{Wloka1967} contain 
for a family $(\Omega_{n})_{n\in\N}$ of open connected sets and $1\leq p<\infty$ the conditions 
\begin{enumerate}
 \item [$(N_{1}^{p})$] $\int_{\Omega_{n}}\bigl(\frac{\nu_{n}(x)}{\nu_{k}(x)}\bigr)^{p}\d x<\infty$ for $n<k$,
 \item [$(N_{2}^{p})$] for $k>n+d/p$ there is $C>0$ such that for all $t\in\Omega_{n}$ there is $r_{t}>0$ 
 such that $B(t,r_{t})\subset \Omega_{k}$ and
 \[
  A(r_{t})\frac{\nu_{n}(t)}{\nu_{k}(x)}\leq C,\quad x\in B(t,r_{t}),
 \]
 where $B(t,r_{t})$ is the ball around $t$ with radius $r_{t}$ and $A(r_{t})$ is the embedding constant from the Sobolev embedding 
 $W^{k}_{p}(B(t,r_{t}))\hookrightarrow \mathcal{C}(\overline{B(t,r_{t})})$, see \cite[\S1.8, Theorem 1, p.\ 54]{Sobolev1991}. 
\end{enumerate}
$\mathcal{C}(\overline{B(t,r_{t})})$ denotes the space of continuous functions on the closure $\overline{B(t,r_{t})}$ and 
$W^{k}_{p}(B(t,r_{t}))$ the Sobolev space of (equivalence classes of) functions on $B(t,r_{t})$ such that 
all weak partial derivatives up to order $k$ are in the Lebesgue space $L^{p}(B(t,r_{t}))$.
The involvement of the Sobolev embedding constants $A(r_{t})$ makes $(N_{2}^{p})$ less applicable 
since only knowing their sheer existence might not be helpful, and even if one explicitly knows them, they might depend on $t$. 
For example, an explicit Sobolev embedding constant for $k=1>d/p$ can be found in \cite[Theorem 2.E, p.\ 200]{Talenti1994} 
but it still depends on $r_{t}$ and thus possibly on $t$. 
Therefore Wloka only applies his conditions (with some additional assumptions) in the case 
where he can take one $r_{t}$ for all $t\in\Omega_{n}$, namely, in the case $\Omega_{n}=\R^{d}$ for all $n\in\N$ 
with $r_{t}=1$ for all $t\in\R^{d}$, see \cite[Satz 4, Folgerung, p.\ 85]{Wloka1967}.  

Triebel considers generalised Schwartz spaces $\mathcal{S}_{\rho}(\Omega):=\mathcal{EV}(\Omega)$ with
open, connected $\Omega\subset\R^{d}$, $\Omega_{n}=\Omega$ and $\nu_{n}(x)=\rho(x)^{n}$, $x\in\Omega$, for all $n\in\N$ 
where $\rho\in\mathcal{C}^{\infty}(\Omega,\R)$, $\rho\geq 1$. 
The spaces $\mathcal{S}_{\rho}(\Omega)$ are generated by partial differential operators and 
Triebel obtains a sufficient condition for nuclearity by using interpolation theory. Namely, if $\rho$ satisfies the conditions
\begin{enumerate}
\item [(T.1)] $\forall\;\alpha\in\mathbb{N}_{0}^{d}\;\exists\;C>0\;\forall\;x\in\Omega:\;
|\partial^{\alpha}\rho(x)|\leq C\rho(x)^{1+|\alpha|}$,
\item [(T.2)] $\forall\;C>0\;\exists\;\varepsilon>0,\,r>0\;\forall\;x\in\Omega:\; 
\rho(x)>C \quad\text{if}\;\d_{\partial\Omega}(x)\leq \varepsilon\;\text{or if}\;|x|\geq r$,
\item [(T.3)] $\exists\;a\geq 0:\;\rho^{-a}\in L^{1}(\Omega)$,
\end{enumerate}
where $\d_{\partial\Omega}(x)$ is the distance of $x\in\Omega$ to the boundary $\partial\Omega$ of $\Omega$, 
then $\mathcal{S}_{\rho}(\Omega)$ is nuclear by \cite[8.3.2 Theorem 1, p.\ 481]{triebel1978}, 
which generalises \cite[Satz 4, p.\ 171]{Triebel1967} ($\Omega$ bounded) 
and \cite[Satz 5, p.\ 302]{Triebel1970} ($\rho(x)\geq C|x|^{a}$ for all $x\in\Omega$ with some $C>0$ and $a>0$).
These conditions are fulfilled for $\Omega=\R^{d}$ and $\rho(x)=1+|x|^{2}$, $x\in\R^{d}$, yielding
the nuclearity of the classical Schwartz space as well. 
Similar to Gelfand and Vilenkin, the drawback of Triebel's conditions is that 
$\rho$ has to be smooth and all $\Omega_{n}$ have to coincide. 

Let us outline the content of the paper. In Section 2 we introduce our sufficient conditions on $\mathcal{V}$ and 
$(\Omega_{n})_{n\in\N}$, stated in \prettyref{cond:weights} and \prettyref{rem:cond_sets}, 
guaranteeing the nuclearity of $\mathcal{EV}(\Omega)$ and explore many examples. 
These conditions are more likely to be applicable as they overcome the disadvantages of 
Gelfand's and Vilenkin's, Wloka's and Triebel's conditions.
In the same section we construct a partition of unity in \prettyref{lem:partition_unity} 
which is then used in our main \prettyref{thm:nuclear}
in Section 3 to prove the sufficiency for nuclearity of our conditions.

\section{Partition of unity}
We begin with our sufficient conditions on the weight functions which are modifications of the conditions 
$(1.1)$-$(1.3)$ in \cite[p.\ 204]{L4} where the case $\Omega_{n}=\Omega$ for all $n\in\N$ is considered.
We set $\|x\|_{\infty}:=\max_{1\leq i\leq d}|x_{i}|$ for $x=(x_{i})\in\R^{d}$ and use the 
convention $\inf\varnothing:=\infty$.

\begin{cond}\label{cond:weights}
Let $\mathcal{V}:=(\nu_{n})_{n\in\N}$ be a family of continuous weights on an open set $\Omega\subset\R^{d}$ 
and $(\Omega_{n})_{n\in\N}$ a family of non-empty Lebesgue measurable sets such that $\Omega_{n}\subset\Omega_{n+1}$
and $\Omega=\bigcup_{n\in\N} \Omega_{n}$. For every $n\in\N$ set 
\[
\d^{\infty}_{n+1}\colon \Omega_{n}\to [0,\infty],\;
\d^{\infty}_{n+1}(x):=\operatorname{inf}\{\|x-\zeta\|_{\infty}\;|\;\zeta\in\partial\Omega_{n+1}\}.
\]
Let us assume that $(\Omega_{n})_{n\in\N}$ is such that for every $k\in\N$ there exists $r_{k}\colon\Omega_{k}\to (0,1]$ with 
$0<r_{k}(x)<\d^{\infty}_{k+1}(x)$ for all $x\in\Omega_{k}$ and for any $n\in\N$ there exists $\psi_{n}\in L^{1}(\Omega_{k})$ with $\psi_{n}>0$ 
and there exist $\N\ni I_{j}(n)\geq n$ and $A_{j}(n)>0$ for $j=1,2,3$ such that for any $x\in\Omega_{k}$:
\begin{enumerate}
  \item [$(\omega.1)$] $\sup_{\zeta\in\R^{d},\,\|\zeta\|_{\infty}\leq r_{k}(x)}\nu_{n}(x+\zeta)
  \leq A_{1}(n)\inf_{\zeta\in\R^{d},\,\|\zeta\|_{\infty}\leq r_{k}(x)}\nu_{I_{1}(n)}(x+\zeta)$,
  \item [$(\omega.2)$] $\nu_{n}(x)\leq A_{2}(n)\psi_{n}(x)\nu_{I_{2}(n)}(x)$,
  \item [$(\omega.3)$] $\nu_{n}(x)\leq A_{3}(n)r_{k}(x)\nu_{I_{3}(n)}(x)$.
\end{enumerate}
\end{cond}

In the case $\Omega_{n}=\Omega$ for all $n\in\N$ conditions like $(\omega.1)$-$(\omega.3)$ 
appear in \cite[2.1 Definition, p.\ 67]{Meise1985} and \cite[1.1 Definition, p.\ 343]{MeiseTaylor1987} 
for $\Omega=\C^{d}$ and $r:=r_{k}=1$ and in \cite{hoermander1967} for $\Omega\subset\C^{d}$ 
where the assumption (ii) of \cite[Theorem 1, p.\ 943]{hoermander1967} means that one may take 
$r(z):=r_{k}(z)=e^{-\mu_{N}(z)-C}$, $z\in\Omega$, for some $N\in\N$ and $C>0$ if the family $\mathcal{V}$ 
is given by $\nu_{n}(z):=e^{-\mu_{n}(z)}$ with a sequence of positive continuous plurisubharmonic functions 
$(\mu_{n})$. We use the following convention from \cite[1.1 Convention, p.\ 205]{L4}.

\begin{conv}
We often delete the number $n$ counting the seminorms (e.g.\ $I_{j}=I_{j}(n)$ or $A_{j}=A_{j}(n)$) 
and indicate compositions with the functions $I_{j}$ only in the index 
(e.g.\ $I_{23}=I_{2}(I_{3}(n))$).
\end{conv}

The conditions $(\omega.1)$-$(\omega.3)$ are closed under multiplication, more generally, we have:

\begin{rem}\label{rem:closed_multipl}
\begin{enumerate}
\item [a)]Let $\widehat{\mathcal{V}}:=(\widehat{\nu}_{n})_{n\in\N}$ be a family fulfilling 
$(\omega.1)$-$(\omega.3)$ and $\widetilde{\mathcal{V}}:=(\widetilde{\nu}_{n})_{n\in\N}$ a family fulfilling 
$(\omega.1)$ and $(\omega.3)$. 
Then $\mathcal{V}:=(\widehat{\nu}_{n}\widetilde{\nu}_{n})_{n\in\N}$ fulfils $(\omega.1)$-$(\omega.3)$.
\item [b)] If $\inf_{x\in\Omega_{k}}r_{k}(x)>0$ for every $k\in\N$, then $(\omega.3)$ is fulfilled.
\end{enumerate}
\end{rem}
\begin{proof}
\begin{enumerate}
 \item [a)] Let $k,n\in\N$, $x\in\Omega_{k}$ and define $r_{k}:=\min(\widehat{r}_{k},\widetilde{r}_{k})$.
\begin{enumerate}
  \item [$(\omega.1)$] We set
   $I_{1}(n):=\max(\widehat{I}_{1}(n),\widetilde{I}_{1}(n))$ and obtain
  \begin{flalign*}
   &\quad\;\sup\{(\widehat{\nu}_{n}\widetilde{\nu}_{n})(x+\zeta)\;|\;\|\zeta\|_{\infty}\leq r_{k}(x)\}\\
   &\leq \widetilde{A}_{1}\widehat{A}_{1}\inf\{\widetilde{\nu}_{\widetilde{I}_{1}}(x+\zeta)\;|\;
   \|\zeta\|_{\infty}\leq \widetilde{r}_{k}(x)\}
   \inf\{\widehat{\nu}_{\widehat{I}_{1}}(x+\eta)\;|\;\|\eta\|_{\infty}\leq \widehat{r}_{k}(x)\}\\
   &\leq\widetilde{A}_{1}\widehat{A}_{1}\inf\{\widetilde{\nu}_{\widetilde{I}_{1}}(x+\zeta)\widehat{\nu}_{\widehat{I}_{1}}(x+\eta)
   \;|\;\|\zeta\|_{\infty},\,\|\eta\|_{\infty}\leq r_{k}(x)\}\\
   &\leq\widetilde{A}_{1}\widehat{A}_{1}\inf\{\widetilde{\nu}_{\widetilde{I}_{1}}(x+\zeta)\widehat{\nu}_{\widehat{I}_{1}}(x+\zeta)\;|\;
   \|\zeta\|_{\infty}\leq r_{k}(x)\}\\
   &\leq\widetilde{A}_{1}\widehat{A}_{1}\inf\{(\widetilde{\nu}_{I_{1}}\widehat{\nu}_{I_{1}})(x+\zeta)\;|\;
   \|\zeta\|_{\infty}\leq r_{k}(x)\}.
  \end{flalign*}
  \item [$(\omega.2)$] We have
  \[
   (\widehat{\nu}_{n}\widetilde{\nu}_{n})(x)
   \leq\widehat{A}_{2}\widehat{\psi}_{n}(x)\widehat{\nu}_{\widehat{I}_{2}}(x)\widetilde{\nu}_{n}(x)
   \leq\widehat{A}_{2}\widehat{\psi}_{n}(x)(\widehat{\nu}_{\widehat{I}_{2}}\widetilde{\nu}_{\widehat{I}_{2}})(x).
  \]
  \item [$(\omega.3)$] We set $I_{3}(n):=\max(\widehat{I}_{3}(n),\widetilde{I}_{3}(n))$ and get
  \[
  (\widehat{\nu}_{n}\widetilde{\nu}_{n})(x)
  \leq \widehat{A}_{3}\widetilde{A}_{3}\widehat{r}_{k}(x)\widetilde{r}_{k}(x)
  \widehat{\nu}_{\widehat{I}_{3}}(x)\widetilde{\nu}_{\widetilde{I}_{3}}(x)
  \leq \widehat{A}_{3}\widetilde{A}_{3}r_{k}(x)(\widehat{\nu}_{I_{3}}\widetilde{\nu}_{I_{3}})(x)
  \]
  since $\widehat{r}_{k}\widetilde{r}_{k}\leq r_{k}$ as $\widehat{r}_{k},\,\widetilde{r_{k}}\leq 1$.
\end{enumerate}
\item [b)] Let $c_{k}:=\inf_{x\in\Omega_{k}}r_{k}(x)>0$. Then we get
\[
 \nu_{n}(x)=r_{k}(x)^{-1}r_{k}(x)\nu_{n}(x)\leq c_{k}^{-1}r_{k}(x)\nu_{n}(x).
\]
\end{enumerate}
\end{proof}

\begin{rem}\label{rem:r_n_loc_bound_away}
Let $\mathcal{V}$ fulfil $(\omega.3)$. The functions $r_{n}$, $n\in\N$, are locally bounded away from zero on $\Omega_{n}$, 
i.e.\ for every compact set $K\subset \Omega_{n}$ there is $\varepsilon>0$ such that $r_{n}(x)\geq \varepsilon$ for all $x\in K$. 
Indeed, if $K\subset\Omega_{n}$ is compact (w.r.t.\ the topology induced by $\Omega$), then $K$ is compact in $\Omega$. 
It follows that 
\[
r_{n}(x)\underset{(\omega.3)}{\geq} \frac{1}{A_{3}(n)}\frac{\nu_{n}(x)}{\nu_{I_{3}(n)}(x)}
\geq \frac{1}{A_{3}(n)}\frac{\inf_{z\in K}\nu_{n}(z)}{\sup_{z\in K}\nu_{I_{3}(n)}(z)}>0,\quad x\in K,
\] 
since the members of the family $\mathcal{V}$ are locally bounded and 
locally bounded away from zero on $\Omega$.
\end{rem}

The following functions generated from $r_{n}$ play a big role in the construction of our partition of unity 
which is used to derive the nuclearity of $\mathcal{EV}(\Omega)$ if the family $\mathcal{V}$ of continuous weights 
fulfils $(\omega.1)$-$(\omega.3)$.

\begin{defn}
Let $\mathcal{V}$ fulfil $(\omega.3)$. For $n\in\N$ set $r_{n,0}:=r_{n}$ and for $k\in\N$ let $r_{n,k}$ be given by 
\[
 r_{n,k}(z):=\inf\{r_{n,k-1}(\eta)\;|\;\eta\in\Omega_{n}:\;\|\eta-z\|_{\infty}\leq r_{n}(\eta)\;\text{or}\;
 \|\eta-z\|_{\infty}\leq r_{n}(z)\},
 \quad z\in\Omega_{n}.
\]
\end{defn}

For our partition of unity we need the positivity of $r_{n,k}$ for all $n\in\N$ and $k\in\N_{0}$, which is guaranteed 
by \prettyref{rem:r_n_loc_bound_away} and the proposition below. 

\begin{prop}\label{prop:loc_bound_away}
Let $U\subset\R^{d}$ be non-empty, $r\colon U\to [0,\infty)$ bounded and $f\colon U\to [0,\infty)$ 
locally bounded away from zero on $U$.
If
\begin{enumerate}
 \item [(i)] $f$ is bounded away from zero on $U$, i.e.\ $\inf_{x\in U}f(x)>0$, or if
 \item [(ii)] $U$ is closed, or if
 \item [(iii)] $U$ is open and $r$ continuous such that $0<r(x)<\d^{\infty}_{\partial U}(x)$ for all $x\in U$ where 
 \[
  \d^{\infty}_{\partial U}\colon U\to (0,\infty],\; \d^{\infty}_{\partial U}(x):=\inf\{\|x-\zeta\|_{\infty}\;|\;\zeta\in \partial U\},
 \]
\end{enumerate}
then 
\[
g\colon U\to [0,\infty),\; g(x):=\inf\{f(\eta)\;|\;\eta\in U:\;\|\eta-x\|_{\infty}\leq r(\eta)\;
 \text{or}\;\|\eta-x\|_{\infty}\leq r(x)\},
\]
is bounded away from zero on $U$ in (i) and locally bounded away from zero on $U$ in (ii) and (iii). In particular, $g>0$ on $U$.
\end{prop}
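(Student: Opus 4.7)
The plan is to reduce all three cases to a single scheme. For each $x_{0}\in U$ I would exhibit a neighbourhood $V$ of $x_{0}$ in $U$ and a set $K\subset U$ such that the witness set
\[
S(x):=\{\eta\in U\;|\;\|\eta-x\|_{\infty}\leq r(\eta)\;\text{or}\;\|\eta-x\|_{\infty}\leq r(x)\}
\]
satisfies $S(x)\subset K$ for every $x\in V$. Then $g(x)\geq \inf_{\eta\in K}f(\eta)$, and the right-hand side is positive either by hypothesis in case~(i), or, in cases~(ii) and~(iii), by compactness of $K$ combined with the local lower bound on $f$: covering $K$ by finitely many neighbourhoods on which $f$ is bounded away from zero yields a uniform positive minorant.

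Case~(i) needs no further work, since $S(x)\subset U$ and $\inf_{U}f>0$. For (ii) and (iii) let $R:=\sup_{U}r<\infty$; the inclusion $\eta\in S(x)$ forces $\|\eta-x\|_{\infty}\leq R$, hence $\|\eta-x_{0}\|_{\infty}\leq R+\alpha$ as soon as $V$ sits inside the $\|\cdot\|_{\infty}$-ball of radius $\alpha$ around $x_{0}$. In case~(ii) the set $U\cap\{z\in\R^{d}\;|\;\|z-x_{0}\|_{\infty}\leq R+\alpha\}$ is closed (since $U$ is closed) and bounded, hence a compact subset of $U$, and the scheme applies with this set as $K$.

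The hard part is case~(iii): here $U$ is only open and $R$ may exceed $\d^{\infty}_{\partial U}(x_{0})$, so mere boundedness in $\R^{d}$ does not keep $S(x)$ inside a compact subset of $U$. The key step is to exploit the strict inequality $r<\d^{\infty}_{\partial U}$ together with the $1$-Lipschitz continuity of $\d^{\infty}_{\partial U}$ to bound $\d^{\infty}_{\partial U}(\eta)$ below uniformly for $\eta\in S(x)$, $x\in V$. If $\|\eta-x\|_{\infty}\leq r(\eta)$, the chain
\[
\d^{\infty}_{\partial U}(x)\leq \d^{\infty}_{\partial U}(\eta)+\|\eta-x\|_{\infty}<\d^{\infty}_{\partial U}(\eta)+r(\eta)<2\,\d^{\infty}_{\partial U}(\eta)
\]
gives $\d^{\infty}_{\partial U}(\eta)>\d^{\infty}_{\partial U}(x)/2$, while if $\|\eta-x\|_{\infty}\leq r(x)$ the same Lipschitz inequality gives $\d^{\infty}_{\partial U}(\eta)\geq \d^{\infty}_{\partial U}(x)-r(x)$, which is strictly positive at $x_{0}$. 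By continuity of $r$ and of $\d^{\infty}_{\partial U}$ one can then shrink $V$ so that both bounds stay above some $\beta>0$ uniformly for $x\in V$.

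Finally, with such $\beta$ I would set $K:=\{z\in U\;|\;\|z-x_{0}\|_{\infty}\leq R+\alpha,\;\d^{\infty}_{\partial U}(z)\geq \beta\}$. This set contains every $S(x)$ for $x\in V$ by construction, is bounded, and is closed in $\R^{d}$: any limit $z$ of points $z_{n}\in K$ satisfies $\d^{\infty}_{\partial U}(z)\geq\beta>0$ by continuity, so $z\notin\partial U$, and cannot lie in the open exterior of $\overline{U}$ either without contradicting $z_{n}\to z$, $z_{n}\in U$; hence $z\in U$ and $z\in K$. So $K$ is a compact subset of $U$, the scheme closes, and the final assertion $g>0$ is automatic once $g$ is (locally) bounded away from zero on $U$.
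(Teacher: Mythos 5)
Your proof is correct, and it takes a genuinely different route from the paper's argument. The paper proceeds by contradiction: assuming $g$ is not locally bounded away from zero, it produces sequences $x_{n}$ in a compact $K\subset U$ and witnesses $\eta_{n}$ with $f(\eta_{n})\to 0$, extracts a Bolzano--Weierstra{\ss} subsequence of $(\eta_{n})$, and in case~(iii) splits into two subcases depending on which of the two defining inequalities for the witness set holds infinitely often, eventually reaching a contradiction via the continuity of $r$ and the strict inequality $r<\d^{\infty}_{\partial U}$. Your argument is instead direct: for each $x_{0}$ you exhibit a neighbourhood $V$ and a single compact $K\subset U$ that contains every witness set $S(x)$ for $x\in V$, so that $g\geq\inf_{K}f>0$ on $V$. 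The crux in case~(iii) is the same phenomenon the paper exploits (Lipschitz continuity of $\d^{\infty}_{\partial U}$ plus the strict gap $r<\d^{\infty}_{\partial U}$ keeps witnesses away from $\partial U$), but you turn it into a uniform lower bound $\beta$ on $\d^{\infty}_{\partial U}$ over $S(x)$, $x\in V$, rather than into a contradiction along a subsequence. Your approach is arguably cleaner, since it avoids the repeated subsequence extraction and the two-way case split inside the contradiction; it buys a constructive description of the compact set controlling $g$ near $x_{0}$. The paper's approach is shorter to state because all three cases are handled inside one contradiction frame. Two small points worth noting: your closing argument that $K$ is closed implicitly uses that $\zeta\mapsto\inf\{\|\zeta-\omega\|_{\infty}\mid\omega\in\partial U\}$ is $1$-Lipschitz on all of $\R^{d}$ (not just on $U$), which is true but should be said; and the degenerate case $\partial U=\varnothing$ (so $\d^{\infty}_{\partial U}\equiv\infty$) should be mentioned explicitly, although your construction handles it correctly since the constraint $\d^{\infty}_{\partial U}(z)\geq\beta$ then becomes vacuous and $K$ is simply a closed ball in $U=\R^{d}$.
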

\begin{proof}
 In case (i), this is obviously true because $\inf_{x\in U}g(x)\geq \inf_{x\in U}f(x)>0$. 
 Let us turn to case (ii) and (iii) and assume the contrary. Then there is a compact set $K\subset U$ such that 
 for every $n\in\N$ there is $x_{n}\in K$ with $g(x_{n})\leq 1/(2n)$. It follows that for every $n\in\N$ 
 there is $\eta_{n}\in\{\eta\in U:\;\|\eta-x_{n}\|_{\infty}\leq r(\eta)\;
 \text{or}\;\|\eta-x_{n}\|_{\infty}\leq r(x_{n})\}$ such that $f(\eta_{n})\leq g(x_{n})+1/(2n)\leq 1/n$. 
 The sequence $(\eta_{n})_{n\in\N}$ is bounded since $r$ and $(x_{n})_{n\in\N}$ are bounded. 
 Hence there are a subsequence $(\eta_{n_{k}})_{k\in\N}$ and $\eta\in \overline{U}$ such that 
 $\eta_{n_{k}}\to\eta$ by the Bolzano-Weierstra{\ss} theorem. 
 If $\eta\in U$, then the set $K_{0}:=\{\eta_{n_{k}}\;|\;k\in\N\}\cup\{\eta\}$ 
 is compact in $U$ and $f(\eta_{n_{k}})\leq 1/n_{k}$ for every $k\in\N$, which contradicts $f$ being locally 
 bounded away from zero on $U$. If $U$ is closed, then $U=\overline{U}$, which settles the case (ii). 
 Let $\eta\in\partial U$ and (iii) hold. Then $\eta_{n_{k}}\to\eta$ implies
 \[
  0<r(\eta_{n_{k}})<\d^{\infty}_{\partial U}(\eta_{n_{k}})\to 0,
 \]
 yielding $r(\eta_{n_{k}})\to 0$. 
 If there are infinitely many $k\in\N$ such that $\|\eta_{n_{k}}-x_{n_{k}}\|_{\infty}\leq r(\eta_{n_{k}})$, then 
 there is a subsequence $(x_{n_{k_{p}}})_{p\in\N}$ in the compact set $K\subset U$ which converges 
 to $\eta\in\partial U$. Since $U$ is open, this is a contradiction. 
 If there are infinitely many $k\in\N$ such that $\|\eta_{n_{k}}-x_{n_{k}}\|_{\infty}\leq r(x_{n_{k}})$, 
 there is a subsequence $(x_{n_{k_{p}}})_{p\in\N}$ with $\|\eta_{n_{k_{p}}}-x_{n_{k_{p}}}\|_{\infty}\leq r(x_{n_{k_{p}}})$ 
 for all $p\in\N$. This bounded subsequence has again a subsequence $(x_{n_{k_{p_{q}}}})_{q\in\N}$ which converges 
 to some $x\in K$. Since $r$ is continuous on $U$, we have $r(x_{n_{k_{p_{q}}}})\to r(x)$. From 
 \begin{align*}
  \|\eta_{n_{k_{p_{q}}}}-x\|_{\infty}&\leq \|\eta_{n_{k_{p_{q}}}}-x_{n_{k_{p_{q}}}}\|_{\infty}+\|x_{n_{k_{p_{q}}}}-x\|_{\infty}
  \leq r(x_{n_{k_{p_{q}}}})+\|x_{n_{k_{p_{q}}}}-x\|_{\infty}
  \to r(x)
 \end{align*}
 follows $\|\eta-x\|_{\infty}\leq r(x)<\d^{\infty}_{\partial U}(x)$. However, this means that $\eta\in U$, which is a 
 contradiction.
\end{proof}

\begin{rem}\label{rem:cond_sets}
Let $\mathcal{V}$ fulfil $(\omega.3)$. From \prettyref{rem:r_n_loc_bound_away} and \prettyref{prop:loc_bound_away}
follows by induction that $r_{n,k}>0$ on $\Omega_{n}$ for every $n\in\N$ and $k\in\N_{0}$ if 
\begin{enumerate}
 \item [(s.1)] $r_{n}$ is bounded away from zero on $\Omega_{n}$ for all $n\in\N$, 
 in particular, if $r_{n}$ is constant for all $n\in\N$, or if
 \item [(s.2)] $\Omega_{n}$ is closed in $\R^{d}$ for all $n\in\N$, or if 
 \item [(s.3)] $\Omega_{n}=\Omega$ and $r_{n}$ is continuous for all $n\in\N$.
\end{enumerate}
\end{rem}

\begin{exa}\label{ex:families_of_weights_1} 
Let $\Omega\subset\R^{d}$ be open and $(\Omega_{n})_{n\in\N}$ a family of non-empty Lebesgue measurable 
sets such that $\Omega_{n}\subset\Omega_{n+1}$
and $\Omega=\bigcup_{n\in\N} \Omega_{n}$. For $n\in\N$ set 
\[
 \D^{\infty}_{n+1}:=\inf\{\|x-\zeta\|_{\infty}\;|\;x\in\Omega_{n},\,\zeta\in\partial\Omega_{n+1}\}.
\]
Let $\Omega_{n}=\R^{d}$ for all $n\in\N$ or $0<\D^{\infty}_{n+1}<\infty$ for all $n\in\N$, 
$(a_{n})_{n\in\N}$ be strictly increasing such that $a_{n}\geq 0$ for all $n\in\N$ or 
$a_{n}\leq 0$ for all $n\in\N$. 
The family $\mathcal{V}:=(\nu_{n})_{n\in\N}$ of positive continuous functions on $\Omega$ given by 
\[
 \nu_{n}\colon\Omega\to (0,\infty),\;\nu_{n}(x):=e^{a_{n}\mu(x)},
\]
with some function $\mu\colon\Omega\to[0,\infty)$ fulfils $\nu_{n}\leq\nu_{n+1}$ for all $n\in\N$ and
  \begin{enumerate}
  \item [(i)] $(\omega.1)$ and $(\omega.3)$ if there is $\delta>0$ such that $|\mu(x)-\mu(y)|\leq 1$
	for all $x,y\in\Omega$ with $\|x-y\|_{\infty}\leq \delta$, in particular, if $\mu$ is uniformly continuous.
  \item [(ii)] $(\omega.1)$-$(\omega.3)$ if $\mu$ is like in (i) and if there are a real sequence 
  $(c_{n})_{n\in\N}$ and $0<\gamma\leq 1$ such that $|x|^{\gamma}\leq\mu(x)+c_{n}$ for all $x\in\Omega_{n}$.
  \item [(iii)] $(\omega.1)$-$(\omega.3)$ if there is some $m\in\N$ such that $\mu(x)=|x|^{m}$ for all $x\in\Omega$.
  \item [(iv)] $(\omega.1)$-$(\omega.3)$ if $\Omega_{n}=\R^{d}$, $a_{n}=n/2$ for all $n\in\N$ and 
  $\mu(x)=\ln(1+|x|^{2})$, $x\in\R^{d}$.
  \item [(v)] $(\omega.1)$-$(\omega.3)$ if $\Omega_{n}$ is bounded for all $n\in\N$ and $\mu=0$.
  \end{enumerate}
In addition, $r_{k}$ can be chosen to be 
  \begin{enumerate}
  \item [(1)] continuous for all $k\in\N$ in (i)-(v), 
  \item [(2)] constant for all $k\in\N$ in (i), (ii), (iv) and (v),
  \item [(3)] constant for all $k\in\N$ in (iii) if either
  $\lim_{n\to\infty}a_{n}=\infty$, $a_{n}\geq 0$ for all $n\in\N$ or 
  $\lim_{n\to\infty}a_{n}=0$, $a_{n}\leq 0$ for all $n\in\N$.
  \end{enumerate}
\end{exa}
\begin{proof}
\begin{enumerate}
\item [(i)] Let $k\in\N$. There is $\delta>0$ such that $|\mu(x)-\mu(y)|\leq 1$
	for all $x,y\in\Omega$ with $\|x-y\|_{\infty}\leq \delta$. Now, we set $r_{k}(x):=\min(\delta,1)$ 
	for every $x\in\Omega_{k}$ if $\Omega_{n}=\R^{d}$ for all $n\in\N$, and 
	$r_{k}(x):=\min(\delta, \D^{\infty}_{k+1},1)/2$ for every $x\in\Omega_{k}$ if $0<\D^{\infty}_{n+1}<\infty$ for all $n\in\N$. 
	Let $x\in\Omega_{k}$ and $\|\zeta\|_{\infty},\|\eta\|_{\infty}\leq r_{k}(x)$. We have 
	\[
	|\mu(x+\zeta)-\mu(x+\eta)|\leq |\mu(x+\zeta)-\mu(x)|+|\mu(x)-\mu(x+\eta)|\leq 2.
	\] 
  If $a_{n}\geq 0$ for all $n\in\N$, it follows that
	\[
	a_{n}\mu(x+\zeta)\leq 2a_{n}+a_{n}\mu(x+\eta),
	\]
	and if $a_{n}\leq 0$ for all $n\in\N$, then 
	\[
	a_{n}\mu(x+\zeta)=-|a_{n}|\mu(x+\zeta)\leq 2|a_{n}|+a_{n}\mu(x+\eta).
	\]
	Hence we obtain
	\[
	e^{a_{n}\mu(x+\zeta)}\leq e^{2|a_{n}|}e^{a_{n}\mu(x+\eta)},
	\]
	implying that $(\omega.1)$ holds. Further, $(\omega.3)$ is fulfilled by virtue of \prettyref{rem:closed_multipl} b).
	\item [(ii)] Due to (i), we only need to show that $\mathcal{V}$ satisfies $(\omega.2)$. For $n,p\in\N$ we define 
	\[
	 \psi_{n,p}\colon\R^{d}\to [0,\infty),\; \psi_{n,p}(x):=\frac{1}{(1+|x|^2)^{p}},
	\]
  and remark that $\psi_{n,p}\in L^{1}(\R^{d})$ if $d\leq 2p-1$. There is $C_{0}>0$ such that for all $x\in\Omega_{k}$ 
	\begin{align*}
	\psi_{n,d}(x)^{-1}&=(1+|x|^{d})^{2}\leq C_{0}e^{(a_{n+1}-a_{n})|x|^{\gamma}}\leq C_{0}e^{(a_{n+1}-a_{n})(c_{k}+\mu(x))}\\
	&=C_{1}e^{(a_{n+1}-a_{n})\mu(x)}
	\end{align*}
	with $C_{1}:=C_{0}e^{(a_{n+1}-a_{n})c_{k}}$, implying
	\[
	\nu_{n}(x)=e^{a_{n}\mu(x)}\leq C_{1}\psi_{n,d}(x)e^{a_{n+1}\mu(x)}=C_{1}\psi_{n,d}(x)\nu_{n+1}(x)
	\]
	for all $x\in\Omega_{k}$ and thus $(\omega.2)$.
	\item [(iii.1)] We start with the case of no further restrictions on the sequence $(a_{n})_{n\in\N}$. 
	Let $k\in\N$ and choose $p\in\N$ such that $d\leq 2p-1$ and $m\leq 2p+1$.
	We set $r_{k}(x):=(1+|x|^{2})^{-p}$ for every $x\in\Omega_{k}$ 
	if $\Omega_{n}=\R^{d}$ for all $n\in\N$, and $r_{k}(x):=\min((1+|x|^{2})^{-p},\D^{\infty}_{k+1})/2$ for every 
	$x\in\Omega_{k}$ if $0<\D^{\infty}_{n+1}<\infty$ for all $n\in\N$. 
	Let $x\in\Omega_{k}$. For $\|\zeta\|_{\infty},\|\eta\|_{\infty}\leq r_{k}(x)$ we obtain by the binomial theorem
	\begin{align*}
	 \mu(x+\zeta)&\leq\left(|x+\eta|+|\zeta-\eta|\right)^{m}=\sum_{j=0}^{m}\dbinom{m}{j}|x+\eta|^{m-j}|\zeta-\eta|^{j}\\
	 &\leq |x+\eta|^{m}+\sum_{j=1}^{m}\dbinom{m}{j}(|x|+d^{1/2}r_{k}(x))^{m-j}2^{j}d^{j/2}r_{k}(x)^{j}\\
	 &= \mu(x+\eta)+\sum_{j=1}^{m}\dbinom{m}{j}2^{j}d^{j/2}\sum_{l=0}^{m-j}\dbinom{m-j}{l}d^{l/2}|x|^{m-j-l}r_{k}(x)^{j+l}.
	\end{align*}
	Further, we have $|x|^{m-j-l}r_{k}(x)^{j+l}<1$ if $|x|<1$, and 
	\[
	 |x|^{m-j-l}r_{k}(x)^{j+l}\leq \frac{|x|^{m-j-l}}{(1+|x|^{2})^{p(j+l)}}\leq\frac{|x|^{m-1}}{(1+|x|^{2})^{p}}\leq 1
	\]
  if $|x|\geq 1$ and $1\leq j+l\leq m$, since $m-1\leq 2p$. Hence there is $C_{1}>0$ such that 
  $\mu(x+\zeta)\leq \mu(x+\eta)+C_{1}$, resulting in $(\omega.1)$. 
  Moreover, there is $C_{2}>0$ such that 
  \[
   r_{k}(x)^{-1}e^{(a_{n}-a_{n+1})|x|^{m}}= (1+|x|^{2})^{p}e^{(a_{n}-a_{n+1})|x|^{m}}\leq C_{2}
  \]
  resp.\
  \[
   r_{k}(x)^{-1}e^{(a_{n}-a_{n+1})|x|^{m}}\leq 2(1/\D^{\infty}_{k+1}+(1+|x|^{2})^{p})e^{(a_{n}-a_{n+1})|x|^{m}}\leq C_{2}
  \]
  for all $x\in\Omega_{k}$, implying $(\omega.3)$. The choice of $\psi_{n,p}$ from (ii) yields $(\omega.2)$ 
  since $d\leq 2p-1$, $r_{k}(x)\leq\psi_{n,p}(x)$ for all $x\in\Omega_{k}$ and $(\omega.3)$ is satisfied.
  \item [(iii.2)] Now, we consider case (3), i.e.\ $\lim_{n\to\infty}a_{n}=\infty$, $a_{n}\geq 0$ for all $n\in\N$ or 
  $\lim_{n\to\infty}a_{n}=0$, $a_{n}\leq 0$ for all $n\in\N$.
	Let $k\in\N$. We set $r_{k}(x):=1$ for every $x\in\Omega_{k}$ 
	if $\Omega_{n}=\R^{d}$ for all $n\in\N$, and $r_{k}(x):=\min(1,\D^{\infty}_{k+1})/2$ for every 
	$x\in\Omega_{k}$ if $0<\D^{\infty}_{n+1}<\infty$ for all $n\in\N$. 
	Let $x\in\Omega_{k}$. For $\|\zeta\|_{\infty},\|\eta\|_{\infty}\leq r_{k}(x)$ we obtain as above
	\begin{align*}
	 \mu(x+\zeta)&\leq\left(|x+\eta|+|\zeta-\eta|\right)^{m}=\sum_{j=0}^{m}\dbinom{m}{j}|x+\eta|^{m-j}|\zeta-\eta|^{j}\\
	 &\leq \sum_{j=0}^{m}\dbinom{m}{j}|x+\eta|^{m-j}2^{j}d^{j/2}
	  \leq \sum_{j=0}^{m}\dbinom{m}{j}(1+|x+\eta|^{m})2^{j}d^{j}\\
	 &= (1+2d)^{m}(1+\mu(x+\eta)).
	\end{align*}
	If $\lim_{n\to\infty}a_{n}=\infty$ and $a_{n}\geq 0$ for all $n\in\N$ resp.\ 
	$\lim_{n\to\infty}a_{n}=0$ and $a_{n}\leq 0$ for all $n\in\N$, then for every $n\in\N$ 
	there is $J_{1}(n)\in\N$ such that $a_{n}(1+2d)^{m}\leq a_{J_{1}(n)}$ 
	resp.\ $a_{n}/(1+2d)^{m}\leq a_{J_{1}(n)}$. In the first case, it follows that 
	\[
	a_{n}\mu(x+\zeta)\leq a_{n}(1+2d)^{m}(1+\mu(x+\eta))\leq a_{J_{1}(n)}\mu(x+\eta)+a_{J_{1}(n)},
	\]
	and in the second, that
	\[
	a_{n}\mu(x+\eta)\leq \frac{a_{n}}{(1+2d)^{m}}\mu(x+\zeta)-a_{n}\leq a_{J_{1}(n)}\mu(x+\zeta)-a_{n},
	\]
	which yields $(\omega.1)$. Moreover, we choose $p\in\N$ such that $d\leq 2p-1$ and $\psi_{n,p}$ from (ii). 
	Then there is $C>0$ such that 
  \[
   \psi_{n,p}(x)^{-1}e^{(a_{n}-a_{n+1})|x|^{m}}= (1+|x|^{2})^{p}e^{(a_{n}-a_{n+1})|x|^{m}}\leq C
  \]
  for all $x\in\Omega_{k}$, implying $(\omega.2)$. 
  Further, $(\omega.3)$ is valid due to \prettyref{rem:closed_multipl} b). 
	\item [(iv)] Let $k\in\N$. We choose $\psi_{n,d}$ like in (ii) and set $r_{k}(x):=1$, $x\in\R^{d}$. 
	Then we have for $x\in\R^{d}$ and $\|\zeta\|_{\infty},\|\eta\|_{\infty}\leq r_{k}(x)$
	\begin{align*}
	 \nu_{n}(x+\zeta)&\leq (1+(|x+\eta|+|\zeta-\eta|)^{2})^{n/2}\\
	 &\leq (1+|x+\eta|^{2}+4d^{1/2}|x+\eta|+4d)^{n/2}\\
	 &\leq (1+|x+\eta|^{2}+4d^{1/2}(1+|x+\eta|^{2})+4d)^{n/2}\\
	 &\leq (1+8d)^{n/2}\nu_{n}(x+\eta)
	\end{align*}
  and thus $(\omega.1)$. Furthermore, we derive $(\omega.2)$ and $(\omega.3)$ from
  \[
   \nu_{n}(x)=\frac{1}{(1+|x|^{2})^{d}}\nu_{n+2d}(x)=\psi_{n,d}(x)\nu_{n+2d}(x).
  \]
  \item [(v)] This follows directly from the choice $r_{k}(x):=\min(\D^{\infty}_{k+1},1)/2$, $x\in\Omega_{k}$, and $\psi_{n}:=1$.
\end{enumerate}
\end{proof}

Referring to \prettyref{rem:cond_sets} and \prettyref{ex:families_of_weights_1}, condition $(s.1)$ is fulfilled in (2) and (3). 
Condition $(s.2)$ is always fulfilled if 
$\Omega:=\Omega_{n}:=\R^{d}$ for all $n\in\N$. Further examples 
of sets $\Omega$ and $(\Omega_{n})_{n\in\N}$ satisfying $(s.2)$ and the conditions of the preceding example 
are given below.

\begin{exa}\label{ex:families_of_sets}
The following non-empty open sets $\Omega\subset\R^{d}$ and families $(\Omega_{n})_{n\in\N}$ 
of sets fulfil $\Omega=\bigcup_{n\in\N} \Omega_{n}$, $\Omega_{n}\subset\Omega_{n+1}$ for all $n\in\N$ and 
$0<\D^{\infty}_{n+1}<\infty$ for all $n\geq N$ for some $N\in\N$:
\begin{enumerate}
\item [(i)] $\Omega$ arbitrary, $\Omega_{n}:=\mathring K_{n}$ where $K_{n}\subset\mathring K_{n+1}$ is a compact exhaustion of $\Omega$.
\item [(ii)] $\Omega$ with $\partial \Omega\neq\varnothing$, 
$\Omega_{n}:=\{x=(x_{i})\in \Omega\;|\;\forall\;i\in I:\;|x_{i}|< n\;\text{and}\;\d^{\|\cdot\|}_{\partial\Omega}(x)> 1/n\}$ 
where $I\subset\{1,\ldots,d\}$ and $\d^{\|\cdot\|}_{\partial\Omega}(x):=\inf\{\|x-\zeta\|\;|\;\zeta\in\partial\Omega\}$, 
$x\in\Omega$, for some norm $\|\cdot\|$ on $\R^{d}$.
\item [(iii)] $\Omega:=\R^{d}$, $\Omega_{n}:=\{x=(x_{i})\in \Omega\;|\;\forall\;i\in I:\;|x_{i}|< n\}$ 
where $I\subset\{1,\ldots,d\}$.
\end{enumerate}
The same is true and additionally $(s.2)$ holds if the family $(\Omega_{n})_{n\in\N}$ in the preceding examples 
is replaced by the sequence of closures $(\overline{\Omega}_{n})_{n\in\N}$.
\end{exa}

Choosing $\Omega$ and $(\Omega_{n})_{n\in\N}$ from \prettyref{ex:families_of_sets} (ii) or (iii) with $I\subsetneq\{1,\ldots,d\}$ and 
\[
\mu\colon \Omega\to [0,\infty),\; \mu(x):=|(x_{i})_{i\in I_{0}}|^{\gamma},
\]
for $I_{0}:=\{1,\ldots,d\}\setminus I$ and $0<\gamma\leq 1$, we see that we are 
in the situation of \prettyref{ex:families_of_weights_1} (ii)
with $c_{n}:=(\sum_{i\in I}n)^{\gamma}$ since $\mu$ is $\gamma$-H\"{o}lder continuous.
In \prettyref{ex:families_of_weights_1} (iv) we have $\mathcal{EV}(\R^{d})=\mathcal{S}(\R^{d})$, the Schwartz space, and in (v) 
with \prettyref{ex:families_of_sets} (i) $\mathcal{EV}(\Omega)=\mathcal{C}^{\infty}(\Omega,\K)$ equipped with 
the usual topology of uniform convergence of all partial derivatives on compact subsets of $\Omega$.

Conditions like $(\omega.1)$-$(\omega.3)$ are also considered in \cite[(5.1), p.\ 41]{L3} for holomorphic functions 
on $\Omega:=\C\setminus [0,\infty)$ with
\[
\Omega_{n}:=\{z\in\C\;|\;|\im(z)|\leq n\}\setminus\{z\in\C\;|\;\re(z)>-1/n,\;|\im(z)|<1/n\},
\]
$\psi_{n}(z):=(1+|z|^{2})^{-2}$ and continuous functions $\mu_{n}\colon\Omega\to[0,\infty)$ such that $\mu_{n}\leq\mu_{n+1}$ 
defining $\nu_{n}(z):=e^{\mu_{n}(z)}$ for every $n\in\N$. 
The condition \cite[(3.2), p.\ 37]{L3} implies (with $x=0$ there and the choice 
$r_{k}(z):=\min(\D^{\infty}_{k+1},\gamma,1)/2$, $z\in\Omega_{k}$, with $\gamma$ from that condition) 
that $(\omega.1)$ and $(\omega.3)$ hold. For $\Omega:=\C\setminus\R$ a similar construction is used.

If $\Omega:=\Omega_{n}:=\R^{d}$ for all $n\in\N$, then condition $(s.3)$ is also fulfilled 
in the corresponding examples of \prettyref{ex:families_of_weights_1} since the constructed $r_{k}$ 
are continuous for every $k\in\N$. Next, we consider an example where $\Omega\neq\R^{d}$.

\begin{exa}\label{ex:families_of_weights_2} 
Let $\Omega\subset\R^{d}$ be a non-empty bounded open set and $\Omega_{n}:=\Omega$ for all $n\in\N$.
The family $\mathcal{V}:=(\nu_{n})_{n\in\N}$ of positive continuous functions on $\Omega$ given by 
\[
 \nu_{n}\colon\Omega\to (0,\infty),\;\nu_{n}(x):=\max_{1\leq j\leq n}\d^{\infty}_{\partial\Omega}(x)^{-j},
\]
fulfils $\nu_{n}\leq \nu_{n+1}$ for all $n\in\N$ and $(\omega.1)$-$(\omega.3)$. Moreover, $(s.3)$ is satisfied.
\end{exa}
\begin{proof}
 We choose $r(x):=r_{k}(x):=\min(\d^{\infty}_{\partial\Omega}(x)/2,1)$, $x\in\Omega_{k}=\Omega$, for every $k\in\N$. 
 For $x\in\Omega$ and $\|\zeta\|_{\infty},\|\eta\|_{\infty}\leq r(x)$ we have 
 \[
  \d^{\infty}_{\partial\Omega}(x+\eta)\leq \d^{\infty}_{\partial\Omega}(x+\zeta)+\|\eta-\zeta\|_{\infty}
  \leq \d^{\infty}_{\partial\Omega}(x+\zeta)+2r(x)\leq \d^{\infty}_{\partial\Omega}(x+\zeta)+\d^{\infty}_{\partial\Omega}(x)
 \]
 and 
 \[
  \d^{\infty}_{\partial\Omega}(x)\leq \d^{\infty}_{\partial\Omega}(x+\zeta)+\|\zeta\|_{\infty}
  \leq \d^{\infty}_{\partial\Omega}(x+\zeta)+\d^{\infty}_{\partial\Omega}(x)/2,
 \]
 implying 
 \[
  \d^{\infty}_{\partial\Omega}(x+\eta)\leq 3\d^{\infty}_{\partial\Omega}(x+\zeta).
 \]
 We deduce that $\nu_{n}(x+\zeta)\leq 3^{n}\nu_{n}(x+\eta)$ and conclude that $(\omega.1)$ holds. Furthermore, we observe that 
 \[
  \nu_{n}(x)=r(x)\nu_{n}(x)/r(x)\leq 2r(x)\nu_{n+1}(x)
 \]
 holds for all $x\in\Omega$, yielding $(\omega.3)$. We note that $r$ is continuously extendable on $\overline{\Omega}$ 
 by setting $r:=0$ on $\partial\Omega$. 
 Thus $(s.3)$ and $(\omega.2)$ are valid since $\psi_{n}:=r\in L^{1}(\Omega)$ as $\Omega$ is bounded.
\end{proof}

Now, we modify a partition of unity constructed in 
\cite[1.4 Lemma, p.\ 207]{L4} for the situation of case $(s.3)$, 
which itself is a modification of the partition of unity from \cite[Lemma 1.4.9, p.\ 29]{H1}.

\begin{lem}\label{lem:partition_unity}
Let $n\in\N$, $\mathcal{V}$ fulfil $(\omega.3)$ and one of the conditions $(s.1)$-$(s.3)$ be satisfied.
There are $\mathcal{K}\subset\N$ and a sequence $(z_{k})_{k\in \mathcal{K}}$ in $\Omega_{n}$ 
with $z_{k}\neq z_{j}$, $k\neq j$, such that the following holds:
\begin{enumerate}
 \item [(p.1)] The balls
 \[
  b_{k}:=\{\zeta\in\R^{d}\;|\;\|\zeta-z_{k}\|_{\infty}<r_{n}(z_{k})/2\},\quad k\in \mathcal{K},
 \]
 form an open covering of $\Omega_{n}$, and any $x\in\Omega_{n}$ is contained in at most $(8/r_{n,2}(x))^{d}$ different balls 
 \[
 B_{k}:=\{\zeta\in\R^{d}\;|\;\|\zeta-z_{k}\|_{\infty}<r_{n}(z_{k})\},\quad k\in \mathcal{K},
 \]
 such that 
 \[
 \Omega_{n}\subset \bigcup_{k\in \mathcal{K}}b_{k}\subset\bigcup_{k\in \mathcal{K}}B_{k}\subset \Omega_{n+1}.
 \]
 \begin{center}
\begin{minipage}{\linewidth}
\centering
\begin{tikzpicture}[scale=0.65]
\draw[->](-0.2,0)--(7,0) node[right] {$x_{1}$} coordinate (x axis);
\draw[->](0,-4.5)--(0,4.5) node[above] {$x_{2}$} coordinate (y axis);
\draw[dashed,thick] (7,2) -- (6,2) arc (90:270:20mm) -- (7,-2);
\draw[dashed,thick] (7,4.5) -- (6,4.5) arc (90:270:45mm) -- (7,-4.5);
 
\node (A) at (6.5,-1.5) {$\Omega_{n}$};
\node (B) at (6.5,-3.25) {$\Omega_{n+1}$}; 

\draw[thick,gray] (5.5,0.75) rectangle (7,2.25);
\draw[thick,gray] (5,1.25) rectangle (5.75,2);
\draw[thick,gray] (4.35,1) rectangle (5.15,1.8);
\draw[thick,red] (4.85,0.5) rectangle (5.85,1.5);
\draw[thick,blue] (5.25,-0.25) rectangle (6.5,1);
\draw[thick,green!60!black] (4.45,-0.55) rectangle (5.65,0.65);
\draw[thick,gray] (4,0.15) rectangle (5,1.15);

\draw[thick,dotted] (3.5,-0.8) rectangle (4.6,0.3);
\draw[thick,dotted] (3.95,-1.2) rectangle (4.85,-0.3);
\draw[thick,dotted] (4.7,-1.3) rectangle (6,0);
\draw[thick,dotted] (5.5,-2.35) rectangle (7,-0.85);

\draw[thick,dotted] (4.3,-1.8) rectangle (5.1,-1);
\draw[thick,dotted] (4.9,-2.15) rectangle (5.9,-1.15);
\draw[thick,dotted] (7,0.5)--(5.75,0.5)--(5.75,-0.95)--(7,-0.95);
\draw[thick,dotted] (7,0.9)--(6.3,0.9)--(6.3,0.1)--(7,0.1);

\def\mypath{
(15.125,3)--(12.75,3)--(12.75,2.2)--(11.95,2.2)--(11.95,1.65)--(11.5,1.65)--(11.5,0.85)--(10.95,0.85)--(10.95,-1.35)--(11.5,-1.35)
--(11.5,-1.65)--(11.9,-1.65)--(11.9,-2.2)--(12.4,-2.2)--(12.4,-2.65)--(12.75,-2.65)--(12.75,-3.1)--(15.125,-3.1)}
\fill[fill=black!10,draw=black] \mypath;
 
\draw[->](7.8,0)--(15.125,0) node[right] {$x_{1}$} coordinate (x axis);
\draw[->](8,-4.5)--(8,4.5) node[above] {$x_{2}$} coordinate (y axis);

\draw[dashed,thick] (15.125,4.5) -- (14,4.5) arc (90:270:45mm) -- (15.125,-4.5);
\draw[dashed,thick] (15.125,2) -- (14,2) arc (90:270:20mm) -- (15.125,-2);

\node[scale=0.7] (C) at (14.25,2.4) {$\bigcup\limits_{k\in \mathcal{K}}B_{k}$};

\draw[thick,red,densely dotted] (12.85,0.5) rectangle (13.85,1.5);
\draw[thick,blue,densely dotted] (13.25,-0.25) rectangle (14.5,1);
\draw[thick,green!60!black,densely dotted] (12.45,-0.55) rectangle (13.65,0.65);

\draw[thick,red] (12.35,0) rectangle (14.35,2);
\draw[thick,blue] (12.625,-0.875) rectangle (15.125,1.625);
\draw[thick,green!60!black] (11.85,-1.15) rectangle (14.25,1.25);
 
\end{tikzpicture}
\end{minipage}
\captionsetup{type=figure}
\caption{Cover of $\Omega_{n}$ with squares of the form $b_{k}$ resp.\ $B_{k}$ contained in $\Omega_{n+1}$, $d=2$}
\end{center}
 \item [(p.2)] The set $M_{k}:=\{m\in \mathcal{K}\;|\;B_{m}\cap B_{k}\neq \varnothing\}$ contains 
 at most $(8/r_{n,3}(z_{k}))^{d}$ elements for every $k\in \mathcal{K}$.
 \item [(p.3)] There is a $\mathcal{C}^{\infty}$-partition of unity $\{h_{k}\;|\;k\in \mathcal{K}\}$ subordinate to 
 $\{B_{k}\;|\;k\in \mathcal{K}\}$ such that for every $\alpha\in\N_{0}^{d}$ there is $\widetilde{C}_{\alpha}>0$ such that 
 for every $k \in \mathcal{K}$
 \[
 |\partial^{\alpha}h_{k}|\leq \widetilde{C}_{\alpha}\bigl(1/r_{n,3}(z_{k})\bigr)^{d+|\alpha|}.
 \]
 \item [(p.4)] Let $\mathcal{V}$ fulfil $(\omega.1)$. For every $m,j,p\in\N$ there is $D>0$ such that for all $k\in\mathcal{K}$
\[
 \nu_{m}(x)\leq D r_{n,j}(z_{k})^{p}\nu_{J_{1}P_{3}J_{1}I_{1}(m)}(z_{k}),\quad x\in B_{k},
\]
where $J_{1}$ is the $j$-fold composition of $I_{1}$ and $P_{3}$ the $p$-fold composition of $I_{3}$.
\end{enumerate}
\end{lem}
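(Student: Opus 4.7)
The plan is to adapt the partition-of-unity construction of \cite[1.4 Lemma, p.~207]{L4}, itself modelled on \cite[Lemma 1.4.9, p.~29]{H1}, to the current setting. For fixed $n$, under any of $(s.1)$--$(s.3)$ the function $r_n$ is strictly positive and locally bounded away from zero on $\Omega_n$ by \prettyref{rem:r_n_loc_bound_away} and \prettyref{prop:loc_bound_away}. A Vitali-type covering argument applied to the family $\{\{\zeta\in\R^d\;|\;\|\zeta-x\|_{\infty}<r_n(x)/c\}\}_{x\in\Omega_n}$ for a suitable constant $c$ (the radii being bounded because $r_n\leq 1$) extracts a necessarily at most countable sub-family of pairwise disjoint cubes $V_k$ such that the enlarged cubes $b_k$ still cover $\Omega_n$. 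The inclusion $B_k\subset\Omega_{n+1}$ is immediate from $r_n(z_k)<\d^{\infty}_{n+1}(z_k)$. For the multiplicity bound in (p.1), if $x\in B_k$ then $\|z_k-x\|_{\infty}<r_n(z_k)$ places $z_k$ in the defining set of $r_{n,1}(x)$, so $r_{n,1}(x)\leq r_n(z_k)$, and iterating once more $r_{n,2}(x)\leq r_{n,1}(z_k)\leq r_n(z_k)$. The disjoint small cubes $V_k$ for these $k$ all lie inside a single bounded $\ell^{\infty}$-cube around $x$, each with volume at least $(r_{n,2}(x)/c)^{d}$, so a volume comparison yields the stated bound $(8/r_{n,2}(x))^d$. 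The argument for (p.2) is parallel: for $m\in M_k$ pick $y\in B_m\cap B_k$; then $r_{n,2}(z_k)\leq r_{n,1}(y)\leq r_n(z_m)$ by two applications of the defining inequality, and a third iteration brings in $r_{n,3}(z_k)$.

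For (p.3), I fix a non-negative $\varphi\in\mathcal{C}^{\infty}(\R^d,\R)$ with support in $\{\|\zeta\|_{\infty}<1\}$ and $\varphi\equiv 1$ on $\{\|\zeta\|_{\infty}\leq 1/2\}$, and set $\chi_k(y):=\varphi((y-z_k)/r_n(z_k))$. Then $\chi_k$ is supported in $B_k$, equals $1$ on $b_k$, and satisfies $|\partial^{\alpha}\chi_k|\leq C_{\alpha}\,r_n(z_k)^{-|\alpha|}$. The sum $\Phi:=\sum_{m\in\mathcal{K}}\chi_m$ is locally finite by (p.2) and $\Phi\geq 1$ on $\Omega_n$ because $(b_k)$ covers $\Omega_n$, so $h_k:=\chi_k/\Phi$ is the desired $\mathcal{C}^{\infty}$-partition of unity subordinate to $\{B_k\}$. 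For $m\in M_k$ the inequality $r_n(z_m)\geq r_{n,2}(z_k)\geq r_{n,3}(z_k)$ (from the argument for (p.2)) gives $|\partial^{\beta}\chi_m|\leq C_{|\beta|}\,r_{n,3}(z_k)^{-|\beta|}$. A Leibniz-rule expansion of $\partial^{\alpha}(\chi_k\Phi^{-1})$ together with a Fa\`a di Bruno-type estimate for derivatives of $\Phi^{-1}$, combined with $|M_k|\leq(8/r_{n,3}(z_k))^{d}$ and $r_{n,3}(z_k)\leq 1$, produces $|\partial^{\alpha}h_k|\leq\widetilde{C}_{\alpha}(1/r_{n,3}(z_k))^{d+|\alpha|}$; the $d$ in the exponent comes from the cardinality of $M_k$ entering the bound on derivatives of $\Phi^{-1}$.

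For (p.4), fix $x\in B_k$ and $m,j,p\in\N$. A single application of $(\omega.1)$ at $z_k$ with $\zeta:=x-z_k$ (admissible since $\|x-z_k\|_{\infty}<r_n(z_k)$) gives $\nu_m(x)\leq A_1\nu_{I_1(m)}(z_k)$. Given any admissible chain $z_k=\eta_0,\eta_1,\ldots,\eta_j$ with each $\eta_{i+1}$ in the defining set of $r_{n,1}(\eta_i)$, applying $(\omega.1)$ $j$ further times along the chain to $\eta_j$, then $(\omega.3)$ $p$ times at $\eta_j$, and then $(\omega.1)$ $j$ times back along the reversed chain to $z_k$ yields
\[
\nu_m(x)\leq A_1^{2j+1}A_3^{p}\,r_n(\eta_j)^{p}\,\nu_{I_1^{j}I_3^{p}I_1^{j+1}(m)}(z_k).
\]
Since this holds for every admissible chain and the left-hand side is chain-independent, I take the infimum on the right first in $\eta_j$ (giving $r_{n,1}(\eta_{j-1})^{p}$ directly from the definition of $r_{n,1}$), then in $\eta_{j-1}$ (giving $r_{n,2}(\eta_{j-2})^{p}$), and so on down to $\eta_0=z_k$, producing the factor $r_{n,j}(z_k)^{p}$. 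Since $I_1^{j}I_3^{p}I_1^{j+1}(m)=J_1P_3J_1I_1(m)$, this completes (p.4) with $D:=A_1^{2j+1}A_3^{p}$.

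The main obstacle is the bookkeeping in (p.3): extracting precisely the exponent $d+|\alpha|$ and a constant $\widetilde{C}_{\alpha}$ independent of $k$ from the Fa\`a di Bruno expansion of $\Phi^{-1}$, combined with the multiplicity bound from (p.2), requires careful organisation. The Vitali selection also needs constants tuned so that the numerator $8$ in the multiplicity estimates emerges cleanly; adjusting these constants is routine but delicate. The iteration in (p.4) is conceptually straightforward but depends on verifying that each chain step $\eta_i\mapsto\eta_{i+1}$ lies in the defining set of $r_{n,1}(\eta_i)$, so that the recursive infimum correctly recovers $r_{n,j}(z_k)^{p}$.
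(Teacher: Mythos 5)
Your radius comparisons and your treatment of (p.4) are fine: unrolling the paper's induction into admissible chains $z_{k}=\eta_{0},\ldots,\eta_{j}$, applying $(\omega.1)$ along the chain, $(\omega.3)$ $p$ times at the far end, $(\omega.1)$ back again, and then taking the nested infima is exactly equivalent to the inductive proof of the paper and gives the correct index $J_{1}P_{3}J_{1}I_{1}(m)$. The selection of the points, however, is genuinely different: the paper takes a Zorn-maximal set that is $r_{n,1}(\cdot)/2$-separated, while you extract a disjoint subfamily by a Vitali-type argument. Your route does give countability, the covering by the $b_{k}$, $B_{k}\subset\Omega_{n+1}$ and finite multiplicities, but any covering lemma of Vitali type forces the disjoint cubes to have radius at most about $r_{n}(z_{k})/5$ (enlargement factor at least $3$ even when only the centres must be covered), and the volume comparison in (p.2) then yields a constant of the form $2c+1>8$ rather than the stated $8$; this is harmless for \prettyref{thm:nuclear} but it does not prove the lemma as written, and your ``routine but delicate tuning'' is precisely the step that is missing.

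The genuine gap is (p.3). With $h_{k}:=\chi_{k}/\Phi$, $\Phi:=\sum_{j}\chi_{j}$, you only know $\Phi\geq 1$ on $\bigcup_{j}b_{j}\supset\Omega_{n}$, whereas $\operatorname{supp}\chi_{k}$ reaches into $B_{k}\setminus\bigcup_{j}b_{j}$, where $\Phi$ may vanish at points of $\operatorname{supp}\chi_{k}$; near such a point where only $\chi_{k}$ is non-zero one has $h_{k}=1$ on $\{\chi_{k}>0\}$, so $h_{k}$ admits no smooth (not even continuous) extension by zero, and you do not obtain smooth functions on $\R^{d}$ with compact support in $B_{k}$ --- which is what ``subordinate to $\{B_{k}\}$'' must deliver and what the proof of \prettyref{thm:nuclear} uses (vanishing of $\partial^{\alpha}(h_{k}f)$ near $\partial B_{k}$ and integration over $B_{k}$). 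Moreover, even on the set where $\Phi\geq 1$, the estimate you sketch does not give the exponent $d+|\alpha|$: every derivative factor $\partial^{\beta}\Phi$ is a sum of up to $|M_{k}|\leq (8/r_{n,3}(z_{k}))^{d}$ non-zero terms and hence costs a factor $(8/r_{n,3}(z_{k}))^{d}$, and the Fa\`a di Bruno expansion of $\Phi^{-1}$ contains products of up to $|\alpha|$ such factors, so your computation produces an exponent of order $|\alpha|(d+1)$ rather than $d+|\alpha|$; the assertion that the cardinality of $M_{k}$ enters only once is exactly the unproved point. The paper avoids both problems by a different construction: H\"ormander's cut-off functions $\varphi_{k}$ from \cite[Theorem 1.4.2]{H1} (whose derivative bounds are tailored so that the constants are controlled uniformly in the order) and the telescoping products $h_{k}=\varphi_{k}\prod_{j=1}^{q_{k}}(1-\varphi_{m_{j}})$, which are automatically smooth on $\R^{d}$, supported in $B_{k}$, sum to $1$ on $\bigcup_{k}b_{k}$, and whose derivatives are estimated by the Leibniz rule together with (p.2). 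To repair your argument you would have to replace the normalisation by $\Phi$ with such a product (or otherwise guarantee positivity of $\Phi$ on $\bigcup_{k}\operatorname{supp}\chi_{k}$ with controlled lower bounds) and redo the derivative estimate accordingly.
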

\begin{proof}
Let $Z$ be the set of subsets $X\subset\Omega_{n}$ such that
\begin{equation}\label{eq:partition_unity1}
\forall\;z,y\in X,\,z\neq y:\; \|z-y\|_{\infty}\geq r_{n,1}(y)/2
\end{equation}
and let $Z$ be equipped with the inclusion $\subset$ as partial order. Every chain $A\subset Z$ has an upper bound in $Z$
given by $\bigcup_{X\in A} X$, yielding that $(Z,\subset)$ is inductively ordered. 
Due to Zorn's Lemma, there is a maximal element $X_{0}\in Z$. 
We note that $X_{0}\subset\Omega_{n}$ is a discrete set since $r_{n,1}>0$ on $\Omega_{n}$ and 
$\{z\in X_{0}\;|\;\|z-y\|_{\infty}< r_{n,1}(y)/2\}=\{y\}$ for every $y\in X_{0}$ by \eqref{eq:partition_unity1}. 
Further, a discrete subset of $\Omega_{n}\subset\R^{d}$ cannot be uncountable by \cite[4.1.15 Theorem, p.\ 255]{engelking}.
Hence there is some $l\in\N\cup\{\infty\}$, a set $\mathcal{K}:=\{k\in\N\;|\;k\leq l\}$ and 
a maximal sequence $(z_{k})_{k\in\mathcal{K}}$ in $\Omega_{n}$ such that
\begin{equation}\label{thm13.0.0.0}
 \|z_{k}-z_{j}\|_{\infty}\geq r_{n,1}(z_{j})/2, \quad k\neq j,
\end{equation}
which also implies $z_{k}\neq z_{j}$ for $k\neq j$. 

$(p.1)(i)$ Let $x\in\Omega_{n}$. If there is no $k\in\mathcal{K}$ such that 
$x=z_{k}$, then there is $j\in\mathcal{K}$ such that 
\[
 \|x-z_{j}\|_{\infty}< r_{n,1}(z_{j})/2\underset{\eta=z=z_{j}}{\leq} r_{n}(z_{j})/2
\]
or 
\[
 \|x-z_{j}\|_{\infty}< \underbrace{r_{n,1}(x)/2}_{\leq r_{n}(x)}\underset{z=x,\eta=z_{j}}{\leq} r_{n}(z_{j})/2
\]
due to the maximality of $(z_{k})_{k\in\mathcal{K}}$. Thus the balls $b_{k}$, $k\in\mathcal{K}$, cover $\Omega_{n}$. 
The condition $r_{n}<\d^{\infty}_{n+1}$ on $\Omega_{n}$ guarantees that $B_{k}\subset \Omega_{n+1}$ for every $k\in\mathcal{K}$.

$(p.1)(ii)$ Let $x\in\Omega_{n}$ and set 
$\beta_{k}:=\{\zeta\in\R^{d}\;|\;\|\zeta-z_{k}\|_{\infty}<r_{n,2}(x)/4\}$ for $k\in\mathcal{K}$. 
The number of balls $\{B_{k}\;|\;\exists\; k\in\mathcal{K}:\;x\in B_{k}\}$ coincides with the number of balls 
$\beta:=\{\beta_{k}\;|\;\exists\; k\in\mathcal{K}:\;x\in B_{k}\}$. 
We observe that the elements of $\beta$ are disjoint. Otherwise, there are $k,j\in\mathcal{K}$, $k\neq j$, such that 
$x\in B_{k}\cap B_{j}$ and there is $\zeta\in\beta_{k}\cap\beta_{j}$, which yields the contradiction
\[
 r_{n,2}(x)/2\underset{\eta=z_{j},z=x}{\leq}r_{n,1}(z_{j})/2\underset{\eqref{thm13.0.0.0}}{\leq}\|z_{k}-z_{j}\|_{\infty}
 \leq \|z_{k}-\zeta\|_{\infty}+\|\zeta-z_{j}\|_{\infty}<r_{n,2}(x)/2.
\]
Moreover, $\beta_{k}\subset\{\zeta\in\R^{d}\;|\;\|\zeta-x\|_{\infty}\leq 2\}=:B$ for every $\beta_{k}\in\beta$. Indeed, 
if $\zeta\in\beta_{k}$ and $x\in B_{k}$, then 
\[
 \|\zeta-x\|_{\infty}\leq\|\zeta-z_{k}\|_{\infty}+\|z_{k}-x\|_{\infty}<r_{n,2}(x)/4+r_{n}(z_{k})\leq (1/4)+1<2.
\]
The ball $B$ contains at most $(2/(r_{n,2}(x)/4))^{d}=(8/r_{n,2}(x))^{d}$ disjoint balls of the form $\beta_{k}$. 
Hence the number of elements of $\beta$ is at most $(8/r_{n,2}(x))^{d}$.

$(p.2)$ Like in $(p.1)(ii)$ we conclude that the balls
$\widetilde{\beta}_{m}:=\{\zeta\in\R^{d}\;|\;\|\zeta-z_{m}\|_{\infty}<r_{n,3}(z_{k})/4\}$ 
are disjoint and contained in $B$ for $m\in M_{k}$, which implies $(p.2)$.

$(p.3)$ First, we want to use the cut-off function from \cite[Theorem 1.4.2, p.\ 25-26]{H1}. 
Let $k\in\mathcal{K}$. We set $X:=B_{k}$, $K:=\overline{b_{k}}$ and choose a positive decreasing sequence 
$(w_{j})_{j\in\mathcal{K}}$ such that $\sum_{j\in\mathcal{K}}w_{j}=1$, e.g.\ 
$w_{j}:=1/2^{j}$ if $\mathcal{K}=\N$ or $w_{j}:=1/l$ if the number $l$ of elements of $\mathcal{K}$ is finite.   
Then 
\[
\delta:=\inf\{\|x-y\|_{\infty}\;|\;x\in\R^{d}\setminus X,\,y\in K\}=r_{n}(z_{k})/2
\]
and with the choice of $d_{j}:=w_{j}r_{n}(z_{k})/3$, $j\in\mathcal{K}$, we obtain 
$\sum_{j\in\mathcal{K}}d_{j}=r_{n}(z_{k})/3<\delta$. 
Moreover, we observe that for every function $\varphi\in\mathcal{C}^{\infty}(X,\R)$ and 
$\alpha=(\alpha_{1},\ldots,\alpha_{d})\in\N_{0}^{d}$ the relation 
\[
\partial^{\alpha}\varphi(x)=\varphi^{(|\alpha|)}(x;\underbrace{e_{1},\ldots,e_{1}}_{\alpha_{1}-\text{times}},
\underbrace{e_{2},\ldots,e_{2}}_{\alpha_{2}-\text{times}},\ldots,\underbrace{e_{d},\ldots,e_{d}}_{\alpha_{d}-\text{times}}),
\quad x\in X,
\]
between the $\alpha$-th partial derivative $\partial^{\alpha}\varphi$ and the differential $\varphi^{(|\alpha|)}$ of order 
$|\alpha|$ holds where $e_{n}$, $1\leq n\leq d$, denotes the $n$-th unit vector in $\R^{d}$. 
Due to \cite[Theorem 1.4.2, p.\ 25-26]{H1}, there exists $\varphi_{k}\in\mathcal{C}^{\infty}_{c}(B_{k},\R)$, i.e.\ 
$\varphi_{k}$ is smooth on $B_{k}$ and its support $\operatorname{supp}\varphi_{k}$ is compact,
such that $0\leq \varphi_{k}\leq 1$, $\varphi_{k}=1$ on a neighbourhood of $\overline{b_{k}}$ and
\[
|\partial^{\alpha}\varphi_{k}(x)|
\leq \frac{c^{|\alpha|}}{d_{1}\ldots d_{|\alpha|}}
=\frac{(3c)^{|\alpha|}}{w_{1}\ldots w_{|\alpha|}}\frac{1}{r_{n}(z_{k})^{|\alpha|}},\quad x\in B_{k},\;\alpha\in\N_{0}^{d}\setminus\{0\},
\]
where $c>0$ only depends on the dimension of $\R^{d}$ (and not on $k$ or $\alpha$). Furthermore, we set 
$\frac{(3c)^{|\alpha|}}{w_{1}\ldots w_{|\alpha|}}:=1$ if $\alpha=0$, and note that 
\[
|\partial^{0}\varphi_{k}(x)|=\varphi_{k}(x)\leq 1= \frac{(3c)^{|0|}}{w_{1}\ldots w_{|0|}}\frac{1}{r_{n}(z_{k})^{|0|}},\quad x\in B_{k}.
\]
\begin{center}
\begin{minipage}{\linewidth}
\centering
\begin{tikzpicture}[scale=0.75]
\draw[->](0.8,1)--(6.5,1) node[right] {$x_{1}$} coordinate (x axis);
\draw[->](1,0.8)--(1,6.5) node[above] {$x_{2}$} coordinate (y axis); 
\begin{scope}
\clip(3,2.7)--(5,2.7) arc (270:360:3mm) -- (5.3,5) arc (0:90:3mm) -- (3,5.3) arc (90:180:3.mm) -- (2.7,3) arc (180:270:3mm);
\foreach \x in {-0.1,0.3,...,5.1}
\draw[green!60!black,line width=0.2mm](\x, 2)--+(4,4);
\end{scope}
\path[fill=white] (3.5,3.8) rectangle (4.45,4.2);
\draw (2,2) rectangle (6,6);
\draw (3,3) rectangle (5,5);
\draw (3,2.7)--(5,2.7) arc (270:360:3mm) -- (5.3,5) arc (0:90:3mm) -- (3,5.3) arc (90:180:3.mm) -- (2.7,3) arc (180:270:3mm);
\draw[blue,rounded corners=5pt] (3,2.35)--(4.5,2.25)--(5.1,2.4)--(5.75,3)--(5.5,4)--(5.65,5.55)--(3.3,5.85)--
(2.3,5.5)--(2.5,4)--(2.2,2.75)--(2.7,2.3)--(3,2.35);
\node[text=blue,scale=0.7] (A) at (3.5,5.5) {$\operatorname{supp}\varphi_{k}$};
\node[scale=0.7,text=green!20!black] (B) at (4,4) {$\varphi_{k}\equiv 1$};
\node (C) at (4.1,3.3) {$b_{k}$};
\node (D) at (5.6,2.3) {$B_{k}$};
\draw[red,thick,densely dotted] (3.2,3)--(3.2,2) node [anchor=north] {$r_{n}(z_{k})/2$};
\end{tikzpicture}
\end{minipage}
\captionsetup{type=figure}
\caption{Support of $\varphi_{k}$, $d=2$}
\end{center}
We denote by $q_{k}$ the number of elements of $M_{k}\cap\{1,\ldots,k-1\}$, enumerate this set
by $M_{k}\cap\{1,\ldots,k-1\}=\{m_{j}\;|\;1\leq j\leq q_{k}\}$, 
set $h_{k}:=\varphi_{k}(1-\varphi_{1})\cdots(1-\varphi_{k-1})$ for $k\geq 2$ and note that 
\[
 h_{k}=\varphi_{k}\prod_{j=1}^{q_{k}}(1-\varphi_{m_{j}}).
\]
Further, we define $h_{1}:=\varphi_{1}$. Let $\alpha\in\N_{0}^{d}\setminus\{0\}$. Since
\[
 r_{n}(z_{m})\geq r_{n,1}(x)\geq r_{n,2}(z_{k})\geq r_{n,3}(z_{k})
\]
for $x\in B_{m}\cap B_{k}$, where the first inequality follows from $\|x-z_{m}\|_{\infty}\leq r_{n}(z_{m})$ and 
the second from $\|x-z_{k}\|_{\infty}\leq r_{n}(z_{k})$, we get by the Leibniz rule for partial derivatives
\begin{flalign*}
 &\hspace{0.35cm}|\partial^{\alpha}h_{k}|\\
 &=|\partial^{\alpha}(\varphi_{k}\prod_{j=1}^{q_{k}}(1-\varphi_{m_{j}}))|\\
 &=\bigl|\sum_{\gamma_{1}\leq \alpha}\sum_{\gamma_{2}\leq\gamma_{1}}\cdots\sum_{\gamma_{q_{k}}\leq\gamma_{q_{k}-1}}
 \dbinom{\alpha}{\gamma_{1}}\dbinom{\gamma_{1}}{\gamma_{2}}\cdots\dbinom{\gamma_{q_{k}-1}}{\gamma_{q_{k}}}
 \partial^{\alpha-\gamma_{1}}\varphi_{k}\partial^{\gamma_{1}-\gamma_{2}}(1-\varphi_{m_{1}})\cdot\ldots\\ 
 &\hspace{6cm}\cdot
 \partial^{\gamma_{q_{k}-1}-\gamma_{q_{k}}}(1-\varphi_{m_{q_{k}-1}})\partial^{\gamma_{q_{k}}}(1-\varphi_{m_{q_{k}}})\bigr|\\
 &\leq\sum_{\gamma_{1}\leq\alpha}\sum_{\gamma_{2}\leq\gamma_{1}}\cdots\sum_{\gamma_{q_{k}}\leq\gamma_{q_{k}-1}}
 \dbinom{\alpha}{\gamma_{1}}\dbinom{\gamma_{1}}{\gamma_{2}}\cdots\dbinom{\gamma_{q_{k}-1}}{\gamma_{q_{k}}}
 |\partial^{\alpha-\gamma_{1}}\varphi_{k}||\partial^{\gamma_{1}-\gamma_{2}}\varphi_{m_{1}}|\cdot\ldots\\ 
 &\hspace{6cm}\cdot |\partial^{\gamma_{q_{k}-1}-\gamma_{q_{k}}}\varphi_{m_{q_{k}-1}}||\partial^{\gamma_{q_{k}}}\varphi_{m_{q_{k}}}|\\
 &\leq \sum_{\gamma_{1}\leq \alpha}\sum_{\gamma_{2}\leq\gamma_{1}}\cdots\sum_{\gamma_{q_{k}}\leq\gamma_{q_{k}-1}}\alpha ! 
 \frac{\frac{(3c)^{|\alpha-\gamma_{1}|}}{w_{1}\ldots w_{|\alpha-\gamma_{1}|}}}{r_{n}(z_{k})^{|\alpha-\gamma_{1}|}}
 \frac{\frac{(3c)^{|\gamma_{1}-\gamma_{2}|}}{w_{1}\ldots w_{|\gamma_{1}-\gamma_{2}|}}}{r_{n}(z_{m_{1}})^{|\gamma_{1}-\gamma_{2}|}}
 \cdot\ldots \\
 &\hspace{6cm}\cdot\frac{\frac{(3c)^{|\gamma_{q_{k}-1}-\gamma_{q_{k}}|}}{w_{1}\ldots w_{|\gamma_{q_{k}-1}-\gamma_{q_{k}}|}}}
 {r_{n}(z_{m_{q_{k}-1}})^{|\gamma_{q_{k}-1}-\gamma_{q_{k}}|}}
 \frac{\frac{(3c)^{|\gamma_{q_{k}}|}}{w_{1}\ldots w_{|\gamma_{q_{k}}|}}}
 {r_{n}(z_{m_{q_{k}}})^{|\gamma_{q_{k}}|}}\\
 &\leq \sum_{\gamma_{1}\leq \alpha}\sum_{\gamma_{2}\leq\gamma_{1}}\cdots\sum_{\gamma_{q_{k}}\leq\gamma_{q_{k}-1}}\alpha ! 
 (3c)^{|\alpha|}\frac{1}{r_{n,3}(z_{k})^{|\alpha|}}\\
 &\hspace{3cm}\cdot\underbrace{\frac{1}{w_{1}\ldots w_{|\alpha-\gamma_{1}|}}\frac{1}{w_{1}\ldots w_{|\gamma_{1}-\gamma_{2}|}}
 \cdot\ldots\cdot\frac{1}{w_{1}\ldots w_{|\gamma_{q_{k}-1}-\gamma_{q_{k}}|}}
 \frac{1}{w_{1}\ldots w_{|\gamma_{q_{k}}|}}}_{\leq\frac{1}{w_{1}\ldots w_{|\alpha|}}\quad (\star)}\\
 &\underset{\mathclap{(p.2)}}{\leq}\; 8^{d}(d^{\alpha_{1}}+d^{\alpha_{2}}+\ldots+d^{\alpha_{d}})\alpha ! 
 (3c)^{|\alpha|}\frac{1}{w_{1}\ldots w_{|\alpha|}}
 \frac{1}{r_{n,3}(z_{k})^{d+|\alpha|}}
\end{flalign*}
where we used for $(\star)$ that $(w_{j})_{j\in\mathcal{K}}$ is decreasing and
$|\alpha-\gamma_{1}|+|\gamma_{1}-\gamma_{2}|+\ldots+|\gamma_{q_{k}-1}-\gamma_{q_{k}}|+|\gamma_{q_{k}}|=|\alpha|$ 
and in the last inequality that the number of sums is at most $(8/r_{n,3}(z_{k}))^{d}$ 
and the number of summands in each sum at most $d^{\alpha_{1}}+d^{\alpha_{2}}+\ldots+d^{\alpha_{d}}$. 
If $\alpha=0$, then 
\[
|\partial^{0}h_{k}|=|h_{k}|\leq 1\leq\frac{1}{r_{n,3}(z_{k})^{d}}
\]
because $r_{n,3}(z_{k})\leq 1$.
Observing that $\sum_{k\in\mathcal{K}}h_{k}=1$ on $\Omega_{n}$ as $(\sum_{k\in\mathcal{K}}h_{k})-1=-\prod_{k\in\mathcal{K}}(1-\varphi_{k})=0$ on 
$\bigcup_{k\in\mathcal{K}}b_{k}$, we proved $(p.3)$.

$(p.4)$ First, we claim that for every $j\in\N$ it holds that
\begin{equation}\label{eq:induction}
\forall\;m,p\in\N\;\exists\;D>0\;\forall\;z\in\Omega_{n}:\; \nu_{m}(z)\leq Dr_{n,j}(z)^{p}\nu_{J_{1}P_{3}J_{1}(m)}(z).
\end{equation}
This follows by induction on $j$. The base case is
\begin{align*}
\frac{\nu_{m}(z)}{r_{n,1}(z)^{p}}
&=\sup\Bigl\{\frac{\nu_{m}(z)}{r_{n}(\eta)^{p}}\;|\;\eta\in\Omega_{n}:\;\|\eta-z\|_{\infty}\leq r_{n}(\eta)\;\text{or}\;
\|\eta-z\|_{\infty}\leq r_{n}(z)\Bigr\}\\
&\underset{\mathclap{(\omega.1)}}{\leq}A_{1}\sup\Bigl\{\frac{\nu_{I_{1}}(\eta)}{r_{n}(\eta)^{p}}\;|\;\eta\in\Omega_{n}:\;
\|\eta-z\|_{\infty}\leq r_{n}(\eta)\;\text{or}\;\|\eta-z\|_{\infty}\leq r_{n}(z)\Bigr\}\\
&\underset{\mathclap{(\omega.3)}}{\leq}A_{1}A_{3}(I_{1})A_{3}(I_{31})\cdots A_{3}((P_{3}-1)I_{1})\\
&\qquad\sup\bigl\{\nu_{P_{3}I_{1}}(\eta)\;|\;\eta\in\Omega_{n}:\;\|\eta-z\|_{\infty}\leq r_{n}(\eta)\;\text{or}\;
\|\eta-z\|_{\infty}\leq r_{n}(z)\bigr\}\\
&\underset{\mathclap{(\omega.1)}}{\leq}D_{0}A_{1}(P_{3}I_{1})\nu_{I_{1}P_{3}I_{1}}(z)
\end{align*}
where $D_{0}:=A_{1}A_{3}(I_{1})A_{3}(I_{31})\cdots A_{3}((P_{3}-1)I_{1})$ and $P_{3}-1$ is the 
$(p-1)$-fold composition of $I_{3}$.
Let us suppose that \eqref{eq:induction} holds for some $j\in\N$. Then we have
\begin{align*}
\frac{\nu_{m}(z)}{r_{n,j+1}(z)^{p}}
 &=\sup\Bigl\{\frac{\nu_{m}(z)}{r_{n,j}(\eta)^{p}}\;|\;\eta\in\Omega_{n}:\;\|\eta-z\|_{\infty}\leq r_{n}(\eta)\;\text{or}\;
 \|\eta-z\|_{\infty}\leq r_{n}(z)\Bigr\}\\
 &\underset{\mathclap{(\omega.1)}}{\leq} A_{1}
 \sup\Bigl\{\frac{\nu_{I_{1}}(\eta)}{r_{n,j}(\eta)^{p}}\;|\;\eta\in\Omega_{n}:\;\|\eta-z\|_{\infty}\leq r_{n}(\eta)\;\text{or}\;
 \|\eta-z\|_{\infty}\leq r_{n}(z)\Bigr\}\\
 &\leq A_{1}D\sup\bigl\{\nu_{J_{1}P_{3}J_{1}I_{1}}(\eta)\;|\;\eta\in\Omega_{n}:\;\|\eta-z\|_{\infty}\leq r_{n}(\eta)\;\text{or}\;
 \|\eta-z\|_{\infty}\leq r_{n}(z)\bigr\}\\
 &\underset{\mathclap{(\omega.1)}}{\leq} A_{1}DA_{1}(J_{1}P_{3}(J_{1}+1))\nu_{(J_{1}+1)P_{3}(J_{1}+1)}(z)
\end{align*}
where $J_{1}+1$ is the $(j+1)$-fold composition of $I_{1}$, which proves the claim. 
Second, let $m,j,p\in\N$. Then there is $D>0$ such that for $k\in\mathcal{K}$ and $x\in B_{k}$
\[
 \frac{\nu_{m}(x)}{r_{n,j}(z_{k})^{p}}\underset{(\omega.1)}{\leq}
 A_{1}\frac{\nu_{I_{1}}(z_{k})}{r_{n,j}(z_{k})^{p}}
 \underset{\eqref{eq:induction}}{\leq} A_{1}D \nu_{J_{1}P_{3}J_{1}I_{1}(m)}(z_{k}).
\]
\end{proof}

In \cite[1.4 Lemma, p.\ 207]{L4} an estimate like in $(p.3)$ is only given for the gradient of $h_{k}$, which can be 
deduced from \cite[Theorem 1.4.1, p.\ 25]{H1} instead of \cite[Theorem 1.4.2, p.\ 25-26]{H1}. An estimate like $(p.4)$ can be found 
in \cite[p.\ 210]{L4}.
\section{Nuclearity}
Now, we are able to state our main theorem on the nuclearity of the space $\mathcal{EV}(\Omega)$ 
which generalises \cite[Theorem 3.7, p.\ 23]{ich} and whose proof is inspired by a proof of nuclearity 
for the (non-weighted) space of $\mathcal{C}^{\infty}$-functions in \cite[Example 28.9 (1), p.\ 349-350]{meisevogt1997}.

\begin{thm}\label{thm:nuclear}
If $\mathcal{V}$ is a family of continuous weights satisfying $(\omega.1)$-$(\omega.3)$ 
and one of the conditions $(s.1)$-$(s.3)$ is fulfilled, 
then $\mathcal{EV}(\Omega)$ is nuclear.
\end{thm}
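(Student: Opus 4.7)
The plan is to verify nuclearity via Grothendieck's criterion: for every continuous seminorm $|\cdot|_{n,m}$ of $\mathcal{EV}(\Omega)$ we exhibit a larger seminorm $|\cdot|_{N,M}$ and a nuclear representation of the canonical map between the associated local Banach spaces. The argument is a weighted version of the classical nuclearity proof for $\mathcal{C}^{\infty}(\Omega)$ sketched in \cite[Example 28.9 (1)]{meisevogt1997}: localize via the partition of unity from \prettyref{lem:partition_unity}, and resolve $f$ on each ball into a Taylor polynomial plus a small remainder, thereby expressing the identity as a sum of rank-one operators.

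First, fix $n,m$ and invoke \prettyref{lem:partition_unity} to obtain the points $(z_k)_{k \in \mathcal{K}} \subset \Omega_n$, the balls $b_k \subset B_k \subset \Omega_{n+1}$, and the partition of unity $(h_k)$ with $\sum_k h_k = 1$ on $\Omega_n$. Pick an integer $M$ to be tuned later. For $f \in \mathcal{EV}(\Omega)$ and $x \in B_k$ expand
\[
f(x) = \sum_{|\beta|\leq M}\frac{\partial^\beta f(z_k)}{\beta!}(x-z_k)^\beta + R_{k,M}(x;f),
\]
where $R_{k,M}$ is the integral form remainder. Multiplying by $h_k$ and summing in $k$ gives on $\Omega_n$
\[
f \;=\; \sum_{k\in\mathcal{K}}\sum_{|\beta|\leq M}\lambda_{k,\beta}(f)\,g_{k,\beta} \;+\; \sum_{k\in\mathcal{K}}h_k\,R_{k,M}(\,\cdot\,;f),
\]
with the continuous linear functionals $\lambda_{k,\beta}\colon f\mapsto \partial^\beta f(z_k)/\beta!$ and the fixed functions $g_{k,\beta}(x):=h_k(x)(x-z_k)^\beta$ (extended by zero outside $B_k$). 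The first double sum is a countable combination of rank-one operators, and the remainder will be handled the same way via the integral form of $R_{k,M}$.

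Next comes the estimation. Since $z_k \in \Omega_{n+1}$, the dual norm of $\lambda_{k,\beta}$ with respect to $|\cdot|_{n+1,M}$ is at most $1/(\beta!\,\nu_{n+1}(z_k))$. The seminorm $|g_{k,\beta}|_{n,m}$ is estimated on $B_k$ by the Leibniz rule, using three ingredients: the derivative bound $|\partial^\gamma h_k|\leq \widetilde C_\gamma r_{n,3}(z_k)^{-d-|\gamma|}$ from $(p.3)$, the monomial bound $|(x-z_k)^{\beta-\gamma}|\leq r_n(z_k)^{|\beta|-|\gamma|}$ on $B_k$, and the weight conversion $\nu_n(x)\leq D\, r_{n,j}(z_k)^p\,\nu_{J_1P_3J_1I_1(n)}(z_k)$ on $B_k$ from $(p.4)$, in which $j$ and $p$ may be chosen arbitrarily large. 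Multiplying the resulting bounds, the $r$-content of $\|\lambda_{k,\beta}\|\cdot|g_{k,\beta}|_{n,m}$ is $r_n(z_k)^{|\beta|}\, r_{n,3}(z_k)^{-d-m}\, r_{n,j}(z_k)^{p}$; choosing $j,p$ large enough and using $(\omega.3)$ (iteratively, at the cost of enlarging the weight indices) forces this to be dominated by $r_n(z_k)^d$.

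Finally, to sum over $k$ one applies $(\omega.2)$ iteratively to write the quotient $\nu_{J_1P_3J_1I_1(n)}(z_k)/\nu_{n+1}(z_k)\leq A\,\psi(z_k)$ for some $\psi \in L^1(\Omega_{n+1})$. Since by $(p.1)$ the balls $b_k$ have bounded overlap in $\Omega_{n+1}$ and volume comparable to $r_n(z_k)^d$, a Riemann-sum argument yields
\[
\sum_{k\in\mathcal{K}} r_n(z_k)^d\,\psi(z_k) \;\leq\; C\int_{\Omega_{n+1}}\psi\,dx \;<\;\infty,
\]
which furnishes the $\ell^1$-summability of the rank-one norms, i.e.\ nuclearity of the truncated Taylor map. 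The remainder $\sum_k h_k R_{k,M}$ carries an extra factor $r_n(z_k)^{M+1}$ compared to the Taylor terms, so by choosing $M$ sufficiently large the same summability argument handles it; its integral form delivers a nuclear representation directly. The main technical difficulty is precisely this bookkeeping: tuning $M$, $j$ and $p$ simultaneously so that the negative $r$-powers produced by differentiating $h_k$, together with the weight-index shifts imposed by $(\omega.1)$-$(\omega.3)$ and $(p.4)$, are all dominated by Taylor's positive $r$-powers while reserving an $r_n(z_k)^d$-margin to convert the sum into the $L^1$-integral above.
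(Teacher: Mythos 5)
Your strategy (Grothendieck's criterion via a rank-one decomposition subordinate to the partition of unity) is genuinely different from the paper's route, but as written it has gaps precisely where the weight conditions must do the work. The summation over $k$ is the first problem. $(\omega.2)$ reads $\nu_{q}\leq A_{2}\psi_{q}\nu_{I_{2}(q)}$, i.e.\ it bounds a quotient of a \emph{smaller}-index weight by a \emph{larger}-index weight; your step needs $\nu_{J_{1}P_{3}J_{1}I_{1}(n)}(z_{k})/\nu_{n+1}(z_{k})\leq A\,\psi(z_{k})$, a large-over-small quotient, which no iteration of $(\omega.2)$ can produce (for the Schwartz weights this quotient is unbounded). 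This is repairable by taking the source seminorm index $N$ at least $I_{2}\bigl(J_{1}P_{3}J_{1}I_{1}(n)\bigr)$ rather than the $n+1$ you fixed, but then two further steps fail as stated: the inequality $\sum_{k}r_{n}(z_{k})^{d}\psi(z_{k})\leq C\int\psi$ is false for a general positive $\psi\in L^{1}$, since an integrable function's values at the countably many points $z_{k}$ are unrelated to its integral (it may be huge exactly there); and $(p.1)$--$(p.2)$ do \emph{not} give bounded overlap --- the covering multiplicity at $x$ is only $(8/r_{n,2}(x))^{d}$, which blows up where $r_{n,2}$ is small, so converting the sum into an integral costs an extra factor $r_{n,2}^{-d}$ that must itself be absorbed by further applications of $(\omega.1)$, $(\omega.3)$ and $(p.4)$. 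None of this bookkeeping is carried out; the paper avoids it altogether by never summing rank-one norms: it disjointifies the supports with the affine maps $\Phi_{k}$, dominates $|f|_{n,m}$ by a single weighted $L^{1}$-integral of the functionals $\mathfrak{I}(\zeta)$, and concludes with Pietsch's integral criterion (a Radon measure on the polar $V^{\circ}$) instead of Grothendieck's series criterion.

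The second, and more substantial, gap is the remainder. The claim that the integral form of $R_{k,M}$ ``delivers a nuclear representation directly'' is unjustified: in that form the evaluation point $z_{k}+t(x-z_{k})$ depends on the output variable $x$, so the expression is not an integral of elementary tensors $(\text{functional})\otimes(\text{fixed function})$, and summability of operator norms alone yields boundedness, not nuclearity. To make it a nuclear representation you must pass to the kernel form $R_{k,M}(x;f)=\sum_{|\beta|=M+1}\int_{B_{k}}K_{\beta}(x,y)\partial^{\beta}f(y)\,dy$ with a kernel singular like $|x-y|^{M+1-d}$, then estimate $|h_{k}\,K_{\beta}(\cdot,y)|_{n,m}$ (which involves $m$ further $x$-derivatives of the singular kernel plus Leibniz terms with $\partial^{\gamma}h_{k}$, hence additional negative powers of $r_{n,3}(z_{k})$), check measurability of $y\mapsto$ rank-one operator, and prove integrability of the rank-one norms --- in effect a second nuclearity argument that is nowhere sketched, and whose summation over $k$ again runs into the issues above. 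The paper's device of iterating the fundamental theorem of calculus to bound $|\partial^{\alpha}(h_{k}f)|$ by $\int_{B_{k}}|\partial^{(m+1,\ldots,m+1)}(h_{k}f)|$ needs no splitting into main term and remainder and is exactly what removes this difficulty. So the proposal is a plausible alternative outline, but the steps it treats as routine are the actual content of the theorem and are not established.
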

\begin{proof}
Let $f\in\mathcal{EV}(\Omega)$, $n\in\N$, $\alpha=(\alpha_{i})\in\N^{d}_{0}$, $|\alpha|\leq m$, and $(z_{k})_{k\in\mathcal{K}}$ 
the sequence from \prettyref{lem:partition_unity}. 
For every $k\in\mathcal{K}$ there exist $a_{i},\; c_{i}, \; i=1,\ldots,d,$ such that $B_{k}=\prod_{i=1}^{d}(a_{i},c_{i})$. 
We get (by induction) for $x=(x_{i})\in b_{k}$, $k\in\mathcal{K}$, with $x':=(x_{2},\ldots,x_{d})$ and 
$\alpha':=(\alpha_{2},\ldots,\alpha_{d})$ 
\begin{flalign}\label{thm13.0.1}
&\hspace{0.35cm} |\partial^{\alpha}(h_{k}f)(x)|\nonumber\\
&=|\partial^{\alpha}(h_{k}f)(x_{1},x')-
\underbrace{\partial^{\alpha}(h_{k}f)(a_{1},x')}_{\underset{(p.3)}{=}0}|\nonumber\\
&=\bigl|\int^{x_{1}}_{a_{1}}{\partial^{(\alpha_{1}+1,\alpha')}(h_{k}f)(\zeta_{0},x')\d\zeta_{0}}\bigr|\nonumber\\
&=\bigl|\int^{x_{1}}_{a_{1}}\int^{\zeta_{0}}_{a_{1}}\cdots\int^{\zeta_{m-\alpha_{1}-1}}_{a_{1}}{\partial^{(m+1,\alpha')}
(h_{k}f)(\zeta_{m-\alpha_{1}},x')\d\zeta_{m-\alpha_{1}}\ldots \d\zeta_{1}\d\zeta_{0}}\bigr|\nonumber\\
&\leq|x_{1}-a_{1}|^{m-\alpha_{1}}\int^{x_{1}}_{a_{1}}{|\partial^{(m+1,\alpha')}(h_{k}f)(\zeta_{1},x')|\d\zeta_{1}}\nonumber\\
&\leq\underbrace{|x_{1}-a_{1}|^{m-\alpha_{1}}\ldots |x_{d}-a_{d}|^{m-\alpha_{d}}}_{\leq(2r_{n}(z_{k}))^{dm-|\alpha|}}
\int^{x_{1}}_{a_{1}}\cdots\int^{x_{d}}_{a_{d}}{|\partial^{(m+1,\ldots,m+1)}(h_{k}f)(\zeta_{1},\ldots,\zeta_{d})|
\d\zeta_{d}\ldots \d\zeta_{1}}\nonumber\\
&\leq 2^{dm}\int_{B_{k}}{|\partial^{(m+1,\ldots,m+1)}(h_{k}f)(\zeta)|\d\zeta}
\end{flalign}
where we used Fubini's theorem in the last inequality.
Furthermore, we set $M(x):=\{k\in\mathcal{K}\;|\;x\in \operatorname{supp}h_{k}\}$ for $x\in\Omega_{n}$. 
For all $x\in\Omega_{n}$ there is $k\in\mathcal{K}$ such that $x\in B_{k}$ and hence the number of elements 
of $M(x)$ is not greater than the one of $M_{k}$ which is at most $(8/r_{n,3}(z_{k}))^{d}$ by $(p.2)$. 
We get for all $x\in \Omega_{n}$
\begin{align}\label{thm13.0.2}
|\partial^{\alpha}f(x)|
&\underset{(p.3)}{=}\bigl|\partial^{\alpha}(\sum_{k\in \mathcal{K}}h_{k}f)(x)\bigr|
\underset{(p.2)}{=}\bigl|\sum_{k\in \mathcal{K}}\partial^{\alpha}(h_{k}f)(x)\bigr|\nonumber\\
&\hspace{0.21cm}\leq\sum_{k\in \mathcal{K}}|\partial^{\alpha}(h_{k}f)(x)|
=\sum_{k\in M(x)}|\partial^{\alpha}(h_{k}f)(x)|\nonumber\\
&\hspace{0.12cm}\underset{\eqref{thm13.0.1}}{\leq}2^{dm}\sum_{k\in M(x)}
\int_{B_{k}}{|\partial^{(m+1,\ldots,m+1)}(h_{k}f)(\zeta)|\d\zeta}.
\end{align}
For $k\in\mathcal{K}$ we define 
\[
 Q_{k}:=\{\zeta\in\R^{d}\;|\;\|\zeta-z_{k}\|_{\infty}<r_{n,1}(z_{k})/8\}\subset b_{k}
\]
and consider the map
\[
\Phi_{k}\colon \R^{d}\to\R^{d}, \;
\Phi_{k}(\zeta):=8\frac{r_{n}(z_{k})}{r_{n,1}(z_{k})}\zeta-\Bigl(8\frac{r_{n}(z_{k})}{r_{n,1}(z_{k})}-1\Bigr)z_{k}.
\]
Then $\Phi_{k}$ is a $\mathcal{C}^{1}$-diffeomorphism, $\Phi_{k}^{(1)}(\zeta)=
\operatorname{diag}\bigl(8\frac{r_{n}(z_{k})}{r_{n,1}(z_{k})},\ldots,8\frac{r_{n}(z_{k})}{r_{n,1}(z_{k})}\bigr)$ 
as Jacobian matrix for $\zeta\in\R^{d}$ and $\Phi_{k}(Q_{k})=B_{k}$. Moreover, we obtain, via the chain rule, for all $\beta\in\N^{d}_{0}$
\begin{equation}\label{thm13.0.3}
\partial^{\beta}(h_{k}f)(\Phi_{k}(\zeta))=
\Bigl(8\frac{r_{n}(z_{k})}{r_{n,1}(z_{k})}\Bigr)^{-|\beta|}\partial^{\beta}(h_{k}f\circ\Phi_{k})(\zeta),\quad\zeta\in\R^{d}.
\end{equation}
Since
\begin{equation}\label{thm13.0.4}
\operatorname{supp}(h_{k}f\circ\Phi_{k})\subset\operatorname{supp}(h_{k}\circ\Phi_{k})
=\Phi_{k}^{-1}(\operatorname{supp}h_{k})
\subset Q_{k}
\end{equation}
for all $k\in\mathcal{K}$ by $(p.3)$ and the definition of $\Phi_{k}$, we get for $k,j\in\mathcal{K}$, $k\neq j$,
\[
[\operatorname{supp}( h_{k}\circ\Phi_{k})\cap\operatorname{supp}(h_{j}\circ\Phi_{j})]
\subset [Q_{k}\cap Q_{j}]\underset{\eqref{thm13.0.0.0}}{=}\varnothing
\]
and thus by \eqref{thm13.0.3}
\begin{equation}\label{thm13.0.5}
\operatorname{supp}([\partial^{\beta}(h_{k}f)]\circ\Phi_{k})\cap\operatorname{supp}([\partial^{\beta}(h_{j}f)]\circ\Phi_{j})
=\varnothing.
\end{equation}
 \begin{center}
	    \begin{minipage}{\linewidth}
	    \centering
	    \begin{tikzpicture}[scale=0.75]
	    \draw[->](1.5,2)--(7.5,2) node[right] {$x_{1}$} coordinate (x axis);
	    \draw[->](1.7,1.8)--(1.7,7.5) node[above] {$x_{2}$} coordinate (y axis);
	    \draw (4,3.5) rectangle (6.5,6);
	    \draw[blue,rounded corners=5pt] (4.25,3.25)--(5.75,3)--(7,3.75)--(6.6,4.75)--(6.9,6.3)--(4.55,6.6)--(3.55,6.25)
	    --(3.75,4.75)--(3.45,3.5)--(4.25,3.25); 
	    \node[scale=0.7] (A) at (4.3,3.75) {$b_{j}$};
	    \node[scale=0.7] (B) at (3.3,5.2) {$b_{k}$};
	    \node[text=blue,scale=0.7] (D) at (5.3,3.3) {$\operatorname{supp}h_{j}$};
	    \node[text=red,scale=0.7] (E) at (3.8,7.2) {$\operatorname{supp}h_{k}$};
	    \filldraw (5.25,4.75) circle [radius=2pt];
	    \filldraw (4,6) circle [radius=2pt];
	    
	    \node[scale=0.7] (I) at (4.3,6.15) {$z_{k}$};
	    \draw (3,5) rectangle (5,7);
	    \draw[red,rounded corners=5pt] (3,4.2)--(4.4,4.5)--(5.2,4.3)--(5.5,4.9)--(5.8,6.6)--(5.7,7.5)--(4.8,7.7)--(3.9,7.4)--(2.8,7.6)
	    --(2.4,6.1)--(2.6,5.2)--(2.3,4.5)--(3,4.2);
	    \node[scale=0.7] (H) at (4.9,4.75) {$z_{j}$};
	    
	    \draw [->,>=latex,arrow,bend left,thick] (7.3,5) to (9.35,5);
	    \node (F) at (8.2,5.7) {$\Phi^{-1}$};
	    \draw [->,>=latex,arrow,bend left,thick] (9.35,4) to (7.3,4);
	    \node (G) at (8.1,3.3) {$\Phi$};
	    
	    \draw[->](8.5,2)--(14,2) node[right] {$x_{1}$} coordinate (x axis);
	    \draw[->](8.7,1.8)--(8.7,7.5) node[above] {$x_{2}$} coordinate (y axis);
	    \draw (10.5,3.5) rectangle (13,6);
	    \draw[blue,rounded corners=1.67pt] (11.42,4.25)--(11.92,4.17)--(12.33,4.42)--(12.2,4.75)--(12.3,5.27)--(11.52,5.37)--(11.18,5.25)
	    --(11.25,4.75)--(11.15,4.33)--(11.42,4.25); 
	    \draw (9.5,5) rectangle (11.5,7);
	    \draw[red,rounded corners=1.67pt] (10.17,5.4)--(10.63,5.5)--(10.9,5.43)--(11,5.63)--(11.1,6.2)--(11.07,6.5)--(10.77,6.57)
	    --(10.47,6.47)--(10.1,6.53)--(9.97,6.03)--(10.03,5.73)--(9.93,5.5)--(10.17,5.4); 
	    \node[scale=0.7] (J) at (10.8,3.75) {$b_{j}$};
	    \node[scale=0.7] (K) at (9.8,5.2) {$b_{k}$};
	    \filldraw (11.75,4.75) circle [radius=2pt];
	    \filldraw (10.5,6) circle [radius=2pt];
	    \node[scale=0.7] (L) at  (11.45,4.75) {$z_{j}$};
	    \node[scale=0.7] (M) at (10.8,6.15) {$z_{k}$};
	    \node[text=red,scale=0.7] (N) at (12.3,6.5) {$\operatorname{supp}(h_{k}\circ \Phi_{k})$};
	    \node[text=blue,scale=0.7] (O) at (11.9,4) {$\operatorname{supp}(h_{j}\circ \Phi_{j})$};
	    \end{tikzpicture}
	    \end{minipage}
\captionsetup{type=figure}
\caption{$\operatorname{supp}(h_{k}\circ \Phi_{k})\cap\operatorname{supp}(h_{j}\circ \Phi_{j})=\varnothing$ 
for $j\in M_{k}$, $j\neq k$, $d=2$}
\end{center}
Applying the transformation formula to \eqref{thm13.0.2}, we obtain for all $x\in \Omega_{n}$ with 
$U:=\bigcup_{j\in\mathcal{K}}B_{j}$ and $p:=I_{11}D_{3}I_{11}(n)$, where $D_{3}$ is the $d$-fold composition of $I_{3}$,
\begin{flalign*}
&\quad\,|\partial^{\alpha}f(x)|\nu_{n}(x)\\
&\leq 2^{dm}\sum_{k\in M(x)}\int_{Q_{k}}{|\partial^{(m+1,\ldots,m+1)}(h_{k}f)(\Phi_{k}(\zeta))
|\Bigl(8\frac{r_{n}(z_{k})}{r_{n,1}(z_{k})}\Bigr)^{d}\d\zeta}\:\nu_{n}(x)\\
&\leq 16^{dm}\sum_{k\in M(x)}\int_{Q_{k}}{|\partial^{(m+1,\ldots,m+1)}(h_{k}f)(\Phi_{k}(\zeta))|
\frac{\nu_{n}(x)}{r_{n,1}(z_{k})^{d}}\d\zeta}\\
&\underset{\mathclap{(p.4)}}{\leq} 16^{dm}\sum_{k\in M(x)}\int_{Q_{k}}{|\partial^{(m+1,\ldots,m+1)}(h_{k}f)(\Phi_{k}(\zeta))|
D\nu_{I_{1}D_{3}I_{11}}(z_{k})\d\zeta}\\
&\underset{\mathclap{(\omega.1)}}{\leq}16^{dm}DA_{1}(I_{1}D_{3}I_{11})\sum_{k\in M(x)}\int_{Q_{k}}{|\partial^{(m+1,\ldots,m+1)}
(h_{k}f)(\Phi_{k}(\zeta))|\nu_{I_{11}D_{3}I_{11}}(\zeta)\d\zeta}\\
&\underset{\mathclap{\eqref{thm13.0.4}}}{=}C_{0}\sum_{k\in M(x)}\:\int_{U}{|\partial^{(m+1,\ldots,m+1)}
(h_{k}f)(\Phi_{k}(\zeta))|\nu_{p}(\zeta)\d\zeta}\\
&=C_{0}\int_{U}{\sum_{k\in M(x)}|\partial^{(m+1,\ldots,m+1)}(h_{k}f)(\Phi_{k}(\zeta))|\nu_{p}(\zeta)\d\zeta}\\
&\leq C_{0}\int_{U}{\sum_{k\in \mathcal{K}}|\partial^{(m+1,\ldots,m+1)}(h_{k}f)(\Phi_{k}(\zeta))|\nu_{p}(\zeta)\d\zeta}\\
&\underset{\mathclap{\eqref{thm13.0.5}}}{=}C_{0}\int_{U}{\bigl|\sum_{k\in \mathcal{K}}\partial^{(m+1,\ldots,m+1)}
(h_{k}f)(\Phi_{k}(\zeta))\bigr|\nu_{p}(\zeta)\d\zeta}
\end{flalign*}
with $C_{0}:=16^{dm}DA_{1}(I_{1}D_{3}I_{11})$. Therefore it follows that
\begin{align}\label{thm13.0.7}
\left|f\right|_{n,m}&=\sup_{\substack{x\in \Omega_{n}\\ \alpha\in \N^{d}_{0},\,|\alpha|\leq m}}
|\partial^{\alpha}f(x)|\nu_{n}(x)\nonumber\\
&\leq C_{0}\int_{U}{\bigl|\sum_{k\in \mathcal{K}}\partial^{(m+1,\ldots,m+1)}
(h_{k}f)(\Phi_{k}(\zeta))\bigr|\nu_{p}(\zeta)\d\zeta}.
\end{align}
Due to condition $(\omega.2)$, there are $I_{2}(p)\geq p$, $\psi_{p}\in L^{1}(\Omega_{n+1})$, $\psi_{p}>0$, and $A_{2}(p)>0$ 
such that $\nu_{p}(\zeta)\leq A_{2}(p)\psi_{p}(\zeta)\nu_{I_{2}(p)}(\zeta)$ for all $\zeta\in\Omega_{n+1}$. 
For $\zeta\in U\subset \Omega_{n+1}$ we set
\[
\mathfrak{I}(\zeta)[f]:=\sum_{k\in \mathcal{K}}\partial^{(m+1,\ldots,m+1)}(h_{k}f)(\Phi_{k}(\zeta))\nu_{I_{2}(p)}(\zeta).
\]
By \eqref{thm13.0.5}, there is at most one $k_{0}\in \mathcal{K}$ such that
$\zeta\in\operatorname{supp}([\partial^{(m+1,\ldots,m+1)}(h_{k_{0}}f)]\circ\Phi_{k_{0}})$.  
Let $\zeta\in U$ be such that there exists such a $k_{0}$ and set $\widetilde{m}:=(m+1,\ldots,m+1)$. 
Due to $(p.3)$ and the Leibniz rule, we have 
\begin{flalign*}
&\hspace{0.36cm}|\partial^{(m+1,\ldots,m+1)}(h_{k_{0}}f)(\Phi_{k_{0}}(\zeta))|\\
&=\bigl|\sum_{\gamma\leq\widetilde{m}}\dbinom{\widetilde{m}}{\gamma}\partial^{\widetilde{m}-\gamma}h_{k_{0}}(\Phi_{k_{0}}(\zeta))
\partial^{\gamma}f(\Phi_{k_{0}}(\zeta))\bigr|\\
&\leq \sum_{\gamma\leq\widetilde{m}}\dbinom{\widetilde{m}}{\gamma}|\partial^{\widetilde{m}-\gamma}h_{k_{0}}(\Phi_{k_{0}}(\zeta))|
\sup_{\substack{x\in B_{k_{0}}\\\beta\in\N^{d}_{0},\,|\beta|\leq |\widetilde{m}|}}|\partial^{\beta}f(x)|\\
&\underset{\mathclap{(p.3)}}{\leq}\;\; \sum_{\gamma\leq\widetilde{m}}\dbinom{\widetilde{m}}{\gamma}
\widetilde{C}_{\widetilde{m}-\gamma}\Bigl(\frac{1}{r_{n,3}(z_{k_{0}})}\Bigr)^{d+|\widetilde{m}-\gamma|}
\sup_{\substack{x\in B_{k_{0}}\\\beta\in\N^{d}_{0},\,|\beta|\leq d(m+1)}}|\partial^{\beta}f(x)|\\
&\leq \Bigl(\sum_{\gamma\leq\widetilde{m}}\dbinom{\widetilde{m}}{\gamma}\widetilde{C}_{\widetilde{m}-\gamma}\Bigr)
\Bigl(\frac{1}{r_{n,3}(z_{k_{0}})}\Bigr)^{d(m+2)}
\sup_{\substack{x\in B_{k_{0}}\\\beta\in\N^{d}_{0},\,|\beta|\leq d(m+1)}}|\partial^{\beta}f(x)|\\
&\leq C_{1}\Bigl(\frac{1}{r_{n,3}(z_{k_{0}})}\Bigr)^{d(m+2)}
\sup_{\substack{x\in B_{k_{0}}\\\beta\in\N^{d}_{0},\,|\beta|\leq d(m+1)}}|\partial^{\beta}f(x)|
\end{flalign*}
where the constant $C_{1}:=\sum_{\gamma\leq\widetilde{m}}\dbinom{\widetilde{m}}{\gamma}\widetilde{C}_{\widetilde{m}-\gamma}$ 
depends on $m$ but not on $\zeta$ or $k_{0}$. 
Hence we have with $q:=I_{1111}G_{3}I_{11112}(p)$, where $G_{3}$ is the $d(m+2)$-fold composition of $I_{3}$, 
\begin{align*}
|\mathfrak{I}(\zeta)[f]|&=\bigl|\sum_{k\in\mathcal{K}}\partial^{(m+1,\ldots,m+1)}
(h_{k}f)(\Phi_{k}(\zeta))\nu_{I_{2}(p)}(\zeta)|\\
&\leq |\partial^{(m+1,\ldots,m+1)}(h_{k_{0}}f)(\Phi_{k_{0}}(\zeta))|\nu_{I_{2}(p)}(\zeta)\\
&\leq C_{1}\sup_{\substack{x\in B_{k_{0}}\\\beta\in\N^{d}_{0},\,|\beta|\leq d(m+1)}}
|\partial^{\beta}f(x)|\frac{\nu_{I_{2}(p)}(\zeta)}{r_{n,3}(z_{k_{0}})^{d(m+2)}}\\
&\underset{\mathclap{\eqref{thm13.0.4},(p.4)}}{\leq}C_{1}D_{1}
\sup_{\substack{x\in B_{k_{0}}\\\beta\in\N^{d}_{0},\,|\beta|\leq d(m+1)}}
|\partial^{\beta}f(x)|\nu_{I_{111}G_{3}I_{11112}}(z_{k_{0}})\\
&\underset{\mathclap{\substack{(\omega.1)\\B_{k_{0}}\subset\Omega_{n+1}}}}{\leq}C_{1}D_{1}A_{1}|f|_{q+1,d(m+1)}
\end{align*}
for all $\zeta\in U$ such that there is $k_{0}\in\mathcal{K}$ with
$\zeta\in\operatorname{supp}([\partial^{(m+1,\ldots,m+1)}(h_{k_{0}}f)]\circ\Phi_{k_{0}})$. 
If there is no such $k_{0}\in \mathcal{K}$ for $\zeta\in U$, we have $\mathfrak{I}(\zeta)[f]=0$, 
which yields that the estimate 
\begin{equation}\label{thm13.0.8a}
|\mathfrak{I}(\zeta)[f]|\leq C_{1}D_{1}A_{1}|f|_{q+1,d(m+1)}
\end{equation}
holds for all $\zeta\in U$.\\
If we set $V:=\{f\in\mathcal{EV}(\Omega)\;|\; |f|_{q+1,d(m+1)}\leq \tfrac{1}{C_{1}D_{1}A_{1}}\}$, 
then $V$ is an absolutely convex neighbourhood of zero in $\mathcal{EV}(\Omega)$. 
We claim that the mapping
\[
\widetilde{\mathfrak{I}}:=\tfrac{1}{\nu_{I_{2}(p)}}\mathfrak{I}\colon U\to \mathcal{EV}(\Omega)_{\sigma}'
\]
is continuous where $\mathcal{EV}(\Omega)_{\sigma}'$ is the dual space of $\mathcal{EV}(\Omega)$ equipped 
with the weak$^{\ast}$-topology $\sigma^{\ast}$. 
Let $\varepsilon>0$, $\zeta\in U$ and $f\in\mathcal{EV}(\Omega)$. If there exists $k_{0}\in\mathcal{K}$ such that 
$\zeta\in\operatorname{supp}([\partial^{(m+1,\ldots,m+1)}(h_{k_{0}}f)]\circ\Phi_{k_{0}})\subset Q_{k_{0}}$ 
(there is at most one $k_{0}$ by \eqref{thm13.0.5}), 
then we choose $0<\delta_{1}<\d^{\infty}_{\partial Q_{k_{0}}}(\zeta)$. For all $x\in U$ such that 
$\|x-\zeta\|_{\infty}<\delta_{1}$ the following is valid:
\begin{flalign*}
&\hspace{0.36cm}|\widetilde{\mathfrak{I}}(x)[f]-\widetilde{\mathfrak{I}}(\zeta)[f]|\\
&=|\partial^{(m+1,\ldots,m+1)}(h_{k_{0}}f)(\Phi_{k_{0}}(x))-\partial^{(m+1,\ldots,m+1)}(h_{k_{0}}f)(\Phi_{k_{0}}(\zeta))|
\end{flalign*}
Since $\partial^{(m+1,\dots,m+1)}(h_{k_{0}}f)\circ\Phi_{k_{0}}$ is continuous on $U$, there is 
$\delta_{2}>0$ such that
\[
|\widetilde{\mathfrak{I}}(x)[f]-\widetilde{\mathfrak{I}}(\zeta)[f]|<\varepsilon
\]
for all $x\in U$ with $\|x-\zeta\|_{\infty}<\min(\delta_{1},\delta_{2})$. 
If there is no such $k_{0}\in\mathcal{K}$ for $\zeta\in U$, we proceed as follows. 
First, we show that the family $(\operatorname{supp}([\partial^{(m+1,\ldots,m+1)}(h_{k}f)]\circ\Phi_{k}))_{k\in\mathcal{K}}$
is locally finite in $U$. 
Let $x\in U$. Then there is $k\in\mathcal{K}$ such that $x\in B_{k}$. The set $B_{k}$ is a neighbourhood of $x$ in $U$ 
and by $(p.2)$ it intersects only finitely many $B_{j}$, $j\in\mathcal{K}$. We deduce from \eqref{thm13.0.3}, 
\eqref{thm13.0.4} and \eqref{thm13.0.5} that 
$B_{k}$ intersects only finitely many $\operatorname{supp}([\partial^{(m+1,\ldots,m+1)}(h_{j}f)]\circ\Phi_{j})$, 
$j\in\mathcal{K}$, as well, yielding the local finiteness.
Second, $S:=\bigcup_{k\in\mathcal{K}}\operatorname{supp}([\partial^{(m+1,\ldots,m+1)}(h_{k}f)]\circ\Phi_{k})$ 
is closed in $U$ as the union of a locally finite family of closed sets. 
Hence there is an open neighbourhood $O_{\zeta}\subset(U\setminus S)$ of $\zeta$ and 
$\widetilde{\mathfrak{I}}(\cdot)[f]=0$ on $O_{\zeta}$. Therefore $\widetilde{\mathfrak{I}}(\cdot)[f]$ 
is continuous in $\zeta\in U$ in both cases, which proves our claim. 
In particular, $\widetilde{\mathfrak{I}}$ is Borel measurable and so $\mathfrak{I}=\widetilde{\mathfrak{I}}\nu_{I_{2}(p)}$ is 
Lebesgue measurable since $\nu_{I_{2}(p)}$ is Lebesgue measurable and scalar multiplication is continuous.  
In addition, we have $\mathfrak{I}(U)\subset \{y\in\mathcal{EV}(\Omega)'\;|\;\forall\;f\in V:\;|y(f)|\leq 1\}=V^{\circ}$ 
by \eqref{thm13.0.8a} where $V^{\circ}$ is the polar set of $V$. Next, we set
\[
u\colon \mathcal{C}(V^{\circ},\R)\to\R,\; u(g):=C_{0}A_{2}(p)\int_{U}{g(\mathfrak{I}(\zeta))\psi_{p}(\zeta)\d\zeta},
\]
and claim that this map is a well-defined positive continuous linear functional where $\mathcal{C}(V^{\circ},\R)$ 
is the space of real-valued continuous functions on $(V^{\circ},\sigma^{\ast})$.
Due to the Alao\u{g}lu-Bourbaki theorem, $V^{\circ}$ is weak$^{\ast}$-compact, the open set $U$ is Lebesgue measurable 
and $g\circ\mathfrak{I}$ is Lebesgue measurable since $g$ is continuous and $\mathfrak{I}$ Lebesgue measurable.
Moreover, $u$ is positive and linear and, if we equip $\mathcal{C}(V^{\circ},\R)$ with the norm
\[
\|g\|:=\sup_{y\in V^{\circ}}|g(y)|,\quad g\in\mathcal{C}(V^{\circ},\R),
\]
continuous as
\begin{align*}
|u(g)|&\leq C_{0}A_{2}(p)\int_{U}{|g(\underbrace{\mathfrak{I}(\zeta)}_{\in V^{\circ}})|\psi_{p}(\zeta)\d\zeta}
\leq C_{0}A_{2}(p)\int_{U}{\sup_{y\in V^{\circ}}|g(y)|\psi_{p}(\zeta)\d\zeta}\\
&=C_{0}A_{2}(p)\int_{U}{\psi_{p}(\zeta)\d\zeta}\|g\|
\underset{(\omega.2)}{\leq} C_{0}A_{2}(p)\|\psi_{p}\|_{L^{1}(\Omega_{n+1})}\|g\|
\end{align*}
since $\psi_{p}\in L^{1}(\Omega_{n+1})$ and $U\subset\Omega_{n+1}$, proving our claim, 
where we denote by $\|\cdot\|_{L^{1}(\Omega_{n+1})}$ the norm on the Lebesgue space $L^{1}(\Omega_{n+1})$.
Thus there is a positive Radon measure $\mu$ on $(V^{\circ},\sigma^{\ast})$  
by \cite[7.6.1 Riesz Representation Theorem, p.\ 139]{Jarchow} such that
\begin{equation}\label{thm13.0.8}
u(g)=\int_{V^{\circ}}{g \,\d\mu}.
\end{equation}
Altogether, we obtain, keeping in mind that every $f\in\mathcal{EV}(\Omega)$ defines a continuous (linear) functional 
$J(f)\colon V^{\circ}\to\K$, $y\mapsto y(f)$, that
\begin{align*}
|f|_{n,m}&\underset{\mathclap{\eqref{thm13.0.7}}}{\leq} C_{0}A_{2}(p)\int_{U}{|\mathfrak{I}(\zeta)[f]|\psi_{p}(\zeta)\d\zeta}
=C_{0}A_{2}(p)\int_{U}{|J(f)(\mathfrak{I}(\zeta))|\psi_{p}(\zeta)\d\zeta}\\
&=u(|J(f)|)
\underset{\mathclap{\eqref{thm13.0.8}}}{=}\;\;\int_{V^{\circ}}{|J(f)|\d\mu}
=\int_{V^{\circ}}{|y(f)|\d\mu(y)}.
\end{align*}
Therefore $\mathcal{EV}(\Omega)$ is nuclear by \cite[4.1.5 Proposition, p.\ 71]{Pietsch1972}.
\end{proof}

Since nuclearity is inherited by (topological) subspaces of nuclear spaces due to \cite[Proposition 28.6, p.\ 347]{meisevogt1997}, 
all subspaces of $\mathcal{EV}(\Omega)$, e.g.\ spaces of holomorphic or harmonic functions, are nuclear as well if 
the family of continuous weights $\mathcal{V}$ satisfies $(\omega.1)$-$(\omega.3)$ and one of the conditions 
$(s.1)$-$(s.3)$ is fulfilled.

\begin{rem}
\prettyref{thm:nuclear} is still valid if we replace the condition of $\nu_{n}$, $n\in\N$, being continuous 
by being locally bounded away from zero, locally bounded and Lebesgue measurable 
due to \prettyref{rem:r_n_loc_bound_away} and the observation that $\mathfrak{J}$ is Lebesgue measurable 
if the functions $\nu_{n}$ are Lebesgue measurable. 
Concerning $(s.2)$, we note that we can switch to an equivalent system of seminorms on $\mathcal{EV}(\Omega)$ 
by replacing $\Omega_{n}$ by its closure $\overline{\Omega}_{n}$ if $\overline{\Omega}_{n}\subset\Omega_{n+1}$ for all $n\in\N$.
Further, we can replace the conditions $(s.1)$-$(s.3)$ by the condition that $r_{n,k}>0$ 
on $\Omega_{n}$ for all $n\in\N$ and all $k\in\{1,2,3\}$ 
as this is the implication from $(s.1)$-$(s.3)$ we need for \prettyref{lem:partition_unity}.
\end{rem}

\begin{rem}
\cite[Lemma, p.\ 144]{nakamura1968_2} and \cite[Theorem, p.\ 145]{nakamura1968_2}, treating the nuclearity of weighted spaces of holomorphic 
functions on an open set $\Omega\subset\C^{d}$, are incorrect. Namely, if $\Omega$ is bounded and the family of weights
$(M_{\alpha})_{\alpha\in A}$ only consists of one element which is continuously extendable on $\overline{\Omega}$, 
then the space $Z_{\Omega}\{M_{\alpha}\}$ is an infinite dimensional Banach space since it contains all 
polynomials. However, from \cite[Lemma, p.\ 144]{nakamura1968_2} and \cite[Theorem, p.\ 145]{nakamura1968_2} 
follows that $Z_{\Omega}\{M_{\alpha}\}$ is nuclear, which is a contradiction.
\end{rem}

\subsection*{Acknowledgements}
We thank M.\ Langenbruch for the hint at the conditions on the weights and the partition of 
unity in \cite{L4}. Further, we are thankful to the anonymous reviewer for the thorough review and helpful suggestions. 
\bibliography{biblio}

\begin{thebibliography}{10}

\bibitem{boiti2019_1}
C.~Boiti, D.~Jornet, and A.~Oliaro.
\newblock About the nuclearity of $\mathcal{S}_{(M_{p})}$ and
  $\mathcal{S}_{\omega}$, 2019.
\newblock arxiv preprint \url{https://arxiv.org/abs/1902.09187} to appear in:
  Advances in Micro-Local and Time-Frequency Analysis, Proceedings of the
  Workshop \glqq $\mu$icro and $\psi$seudo $\ldots$, but great! MicroLocal and
  Time Frequency Analysis 2018\grqq{} (Turino, 2-6 July 2018), Appl. Numer.
  Harmon. Anal., P. Boggiatto et al. (eds.), Birkh\"auser, Basel.

\bibitem{boiti2019_2}
C.~Boiti, D.~Jornet, A.~Oliaro, and G.~Schindl.
\newblock Nuclearity of rapidly decreasing ultradifferentiable functions and
  time-frequency analysis, 2019.
\newblock arxiv preprint \url{https://arxiv.org/abs/1906.05171}.

\bibitem{Dom1}
J.~Bonet and P.~Doma\'nski.
\newblock {The splitting of exact sequences of PLS-spaces and smooth dependence
  of solutions of linear partial differential equations}.
\newblock {\em Adv. Math.}, 217:561--585, 2008.

\bibitem{Braun1990}
R.~W. Braun, R.~Meise, and B.~A. Taylor.
\newblock Ultradifferentiable functions and {F}ourier analysis.
\newblock {\em Results Math.}, 17(3):206--237, 1990.

\bibitem{engelking}
R.~Engelking.
\newblock {\em General topology}.
\newblock Sigma Series Pure Math. 6. Heldermann, Berlin, 1989.

\bibitem{Funakosi1968}
S.~Funakosi.
\newblock {On criterion for the nuclearity of space $S\{M_{p}\}$}.
\newblock {\em Proc. Japan Acad.}, 44(2):62--65, 1968.

\bibitem{gel1968}
I.~M. Gelfand and G.~E. Shilov.
\newblock {\em Generalized functions, Volume 2: Spaces of fundamental and
  generalized functions}.
\newblock Academic Press, New York, 1968.

\bibitem{Gelfand1964}
I.~M. Gelfand and N.~Ya. Vilenkin.
\newblock {\em Generalized functions, Volume 4: Applications of harmonic
  analysis}.
\newblock Academic Press, New York, 1964.

\bibitem{Gro}
A.~Grothendieck.
\newblock {\em Produits tensoriels topologiques et espaces nucl\'{e}aires}.
\newblock Mem. Amer. Math. Soc. 16. AMS, Providence, 4th edition, 1966.

\bibitem{hoermander1967}
L.~H\"ormander.
\newblock Generators for some rings of analytic functions.
\newblock {\em Bull. Amer. Math. Soc.}, 73(6):943--949, 1967.

\bibitem{H1}
L.~H\"{o}rmander.
\newblock {\em The analysis of linear partial differential operators I}.
\newblock Classics Math. Springer, Berlin, 2nd edition, 2003.

\bibitem{Ion/Ka}
P.~D.~F. Ion and T.~Kawai.
\newblock {Theory of vector-valued hyperfunctions}.
\newblock {\em Publ. RIMS Kyoto Univ.}, 11:1--19, 1975.

\bibitem{Ito/Nag}
Y.~Ito and S.~Nagamachi.
\newblock {Theory of $H$-valued Fourier hyperfunctions}.
\newblock {\em Proc. Japan Acad.}, 51(7):558--561, 1975.

\bibitem{Jarchow}
H.~Jarchow.
\newblock {\em Locally convex spaces}.
\newblock Math. Leitf\"{a}den. Teubner, Stuttgart, 1981.

\bibitem{J}
K.~Junker.
\newblock {\em Vektorwertige Fourierhyperfunktionen und ein Satz vom
  Bochner-Schwartz-Typ}.
\newblock PhD thesis, Universit\"{a}t D\"{u}sseldorf, D\"{u}sseldorf, 1979.

\bibitem{Kaballo}
W.~Kaballo.
\newblock {\em Aufbaukurs Funktionalanalysis und Operatortheorie}.
\newblock Springer, Berlin, 2014.

\bibitem{Kan}
A.~Kaneko.
\newblock {\em Introduction to hyperfunctions}.
\newblock Math. Appl. (Japan Ser.) 3. Kluwer, Dordrecht, 1988.

\bibitem{Kawai}
T.~Kawai.
\newblock On the theory of {F}ourier hyperfunctions and its applications to
  partial differential equations with constant coefficients.
\newblock {\em J. Fac. Sci. Univ. Tokyo, Sect. IA}, 17:467--517, 1970.

\bibitem{Kom7}
H.~Komatsu.
\newblock {Ultradistributions, I, Structure theorems and a characterization}.
\newblock {\em J. Fac. Sci. Univ. Tokyo, Sect. IA}, Math. 20:25--105, 1973.

\bibitem{ich}
K.~Kruse.
\newblock {\em Vector-valued Fourier hyperfunctions}.
\newblock PhD thesis, Universit\"{a}t Oldenburg, Oldenburg, 2014.

\bibitem{kruse2017}
K.~Kruse.
\newblock {Weighted vector-valued functions and the $\varepsilon$-product},
  2017.
\newblock arxiv preprint \url{https://arxiv.org/abs/1712.01613}.

\bibitem{kruse2018_5}
K.~Kruse.
\newblock Surjectivity of the $\overline{\partial}$-operator between spaces of
  weighted smooth vector-valued functions, 2018.
\newblock arxiv preprint \url{https://arxiv.org/abs/1810.05069}.

\bibitem{kruse2018_2}
K.~Kruse.
\newblock The approximation property for weighted spaces of differentiable
  functions.
\newblock In M.~Kosek, editor, {\em Function Spaces XII (Krak\'ow, 9-14 July
  2018)}, volume 119 of {\em Banach Center Publications}, pages 233--258, Inst.
  Math., Polish Acad. Sci., Warszawa. 2019.

\bibitem{L4}
M.~Langenbruch.
\newblock {Splitting of the $\overline{\partial}$-complex in weighted spaces of
  square integrable functions}.
\newblock {\em Rev. Math. Complut.}, 5:201--223, 1992.

\bibitem{L2}
M.~Langenbruch.
\newblock {Asymptotic Fourier and Laplace transformations for hyperfunctions}.
\newblock {\em Studia Math.}, 205(1):41--69, 2011.

\bibitem{L3}
M.~Langenbruch.
\newblock {Extension of Sato's hyperfunctions}.
\newblock {\em Funct. Approx. Comment. Math.}, 44(1):33--44, 2011.

\bibitem{Meise1985}
R.~Meise.
\newblock {Sequence space representations for $(DFN)$-algebras of entire
  functions modulo closed ideals}.
\newblock {\em J. Reine Angew. Math.}, 363:59--95, 1985.

\bibitem{MeiseTaylor1987}
R.~Meise and B.~A. Taylor.
\newblock {Splitting of closed ideals in $(DFN)$-algebras of entire functions
  and the property $(DN)$}.
\newblock {\em Trans. Amer. Math. Soc.}, 302(1):341--370, 1987.

\bibitem{meisevogt1997}
R.~Meise and D.~Vogt.
\newblock {\em Introduction to functional analysis}.
\newblock Oxf. Grad. Texts Math. 2. Clarendon Press, Oxford, 1997.

\bibitem{Mityagin1960}
B.~S. Mityagin.
\newblock {Nuclearity and other properties of spaces of type $S$}.
\newblock {\em Tr. Mosk. Mat. Obs.}, 9:317--328, 1960.
\newblock [In Russian].

\bibitem{nakamura1968_1}
M.~Nakamura.
\newblock {Note on the nuclearity of some function spaces, I}.
\newblock {\em Proc. Japan Acad. Ser. A Math. Sci.}, 44(2):49--53, 1968.

\bibitem{nakamura1968_2}
M.~Nakamura.
\newblock {Note on the nuclearity of some function spaces, II}.
\newblock {\em Proc. Japan Acad. Ser. A Math. Sci.}, 44(3):144--146, 1968.

\bibitem{Petzsche1978}
H.-J. Petzsche.
\newblock {Die Nuklearit{\"a}t der Ultradistributionsr{\"a}ume und der Satz vom
  Kern I}.
\newblock {\em Manuscripta Math.}, 24(2):133--171, 1978.

\bibitem{Pietsch1972}
A.~Pietsch.
\newblock {\em Nuclear locally convex spaces}.
\newblock Ergeb. Math. Grenzgeb. (2) 66. Springer, Berlin, 1972.

\bibitem{Sobolev1991}
S.~L. Sobolev.
\newblock {\em Some applications of functional analysis in mathematical
  physics}.
\newblock Translat. Math. Monogr. 90. AMS, Providence, 3rd edition, 1991.

\bibitem{Talenti1994}
G.~Talenti.
\newblock Inequalities in rearrangement invariant function spaces.
\newblock In M.~Krbec, A.~Kufner, B.~Opic, and J.~R\'{a}kosn\'{i}k, editors,
  {\em Nonlinear Analysis, Function Spaces and Applications, Proc. of the
  Spring School}, volume~5, pages 177--230, Prometheus, Prague. 1994.

\bibitem{Triebel1967}
H.~Triebel.
\newblock {Erzeugung nuklearer lokalkonvexer R{\"a}ume durch singul{\"a}re
  Differentialoperatoren zweiter Ordnung}.
\newblock {\em Math. Ann.}, 174(3):163--176, 1967.

\bibitem{Triebel1970}
H.~Triebel.
\newblock {Nukleare Funktionenr{\"a}ume und singul{\"a}re elliptische
  Differentialoperatoren}.
\newblock {\em Studia Math.}, 38(1):285--311, 1970.

\bibitem{triebel1978}
H.~Triebel.
\newblock {\em Interpolation theory, function spaces, differential operators}.
\newblock Mathematical Library 18. North-Holland, Amsterdam, 1978.

\bibitem{V1}
D.~Vogt.
\newblock {On the functors $\operatorname{Ext}^{1}\left(E,F\right)$ for
  Fr\'{e}chet spaces}.
\newblock {\em Studia Math.}, 85(2):163--197, 1987.

\bibitem{Wloka1965}
J.~Wloka.
\newblock Kernel functions and nuclear spaces.
\newblock {\em Bull. Amer. Math. Soc.}, 71(5):720--723, 1965.

\bibitem{Wloka1966}
J.~Wloka.
\newblock {Reproduzierende Kerne und nukleare R{\"a}ume I}.
\newblock {\em Math. Ann.}, 163(3):167--188, 1966.

\bibitem{Wloka1967}
J.~Wloka.
\newblock {Reproduzierende Kerne und nukleare R{\"a}ume. II}.
\newblock {\em Math. Ann.}, 172(2):79--93, 1967.

\bibitem{Yamanaka1961}
T.~Yamanaka.
\newblock {Note on some function spaces in Gelfand and Silov's theory of
  generalized functions}.
\newblock {\em Commentarii mathematici Universitatis Sancti Pauli/Rikkyo
  Daigaku sugaku zasshi}, 9(1):1--6, 1961.

\end{thebibliography}
\bibliographystyle{plain}
\end{document}